\newtheorem{Thm}{Theorem}[section]
\newtheorem{Prp}[Thm]{Proposition}
\newtheorem{Cor}[Thm]{Corollary}
\newtheorem{Lma}[Thm]{Lemma}
\theoremstyle{definition}
\newtheorem{Def}[Thm]{Definition}
\newtheorem{Rk}[Thm]{Remark}
\numberwithin{equation}{section}
\newcommand{\Id}{\operatorname{Id}}
\newcommand{\ID}{\mathbf{Id}}
\newcommand{\sgn}{\operatorname{sgn}}
\newcommand{\Pf}{\mathfrak{Pf}}
\newcommand{\ort}{\mathrm{O}}
\newcommand{\N}{\mathbb N}
\newcommand{\PGO}{\mathbf{PGO}}
\newcommand{\pgo}{\mathrm{PGO}}
\newcommand{\gl}{\mathrm{GL}}
\newcommand{\GO}{\mathbf{GO}}
\newcommand{\go}{\mathrm{GO}}
\newcommand{\End}{\mathrm{End}}
\newcommand{\ad}{\mathrm{ad}}
\newcommand{\Comp}{\mathfrak{Comp}}
\newcommand{\Aut}{\mathbf{Aut}}
\newcommand{\aut}{\mathrm{Aut}}
\newcommand{\Inn}{\mathbf{Inn}}
\newcommand{\Alg}{\text{-}\mathfrak{Alg}}
\newcommand{\Hom}{\operatorname{Hom}}
\newcommand{\Set}{\mathfrak{Set}}
\newcommand{\AUT}{\mathfrak{Aut}}
\newcommand{\TRI}{\mathfrak{Tri}}
\newcommand{\Grp}{\mathfrak{Grp}}
\newcommand{\G}{\mathbf{G}}
\begin{document}
\title[Composition Algebras and Algebraic Groups]{Composition Algebras and Outer Automorphisms of Algebraic Groups}
\author[S. Alsaody]{Seidon Alsaody}
\address{Uppsala University\\Dept.\ of Mathematics\\P.O.\ Box 480\\751 06 Uppsala\\Sweden}
\email{seidon.alsaody@math.uu.se}

\begin{abstract} In this note, we establish an equivalence of categories between the category of all eight-dimensional
composition algebras with any given quadratic form $n$ over a field $k$ of characteristic not two, and a category arising from an
action of the projective similarity group of $n$ on certain pairs of automorphisms of the group scheme $\PGO^+(n)$ defined over
$k$. This extends results recently obtained in the same direction for symmetric composition algebras. We also derive known results
on composition algebras from our equivalence.
\end{abstract}

\subjclass[2010]{17A75, 20G15, 11E88.}
\keywords{Composition algebras, algebraic groups, outer automorphisms, triality.}
\maketitle

\section{Introduction}
The problem of classifying finite-dimensional composition algebras up to isomorphism is a long open one which has attracted much
attention, as in \cite{EM}, \cite{EP}, \cite{P}, \cite{Ca} and \cite{A4}. Some classes of these algebras, such as the symmetric
ones, are
quite well understood. In general, however, the problem is far from being solved. Finite-dimensional composition algebras
necessarily have dimension 1, 2, 4 or 8, and it is the eight-dimensional case which is the most widely open and the one on which
we shall focus here. Part of the difficulty arises from the appearance of the triality phenomenon in the isomorphism criteria.

In the recent paper \cite{CKT}, the authors established a correspondence between eight-dimensional symmetric composition
algebras
with quadratic form $n$ over a field $k$ and outer automorphisms of order three of the affine group scheme $\PGO^+(n)$, called
trialitarian automorphisms. They
further
showed that this induces a bijection between isomorphism classes of symmetric composition algebras on the one hand, and conjugacy
classes of trialitarian automorphisms on the other. This was further developed in \cite{CEKT}.

Our aim is to extend this approach. Namely, building on the results of \cite{CKT} and \cite{CEKT}, we relate not necessarily
symmetric composition algebras of dimension eight with
quadratic form $n$ to certain pairs of outer automorphisms of $\PGO^+(n)$, and prove that isomorphisms of such algebras correspond
bijectively to simultaneous conjugation by inner automorphisms of $\PGO(n)$. We are then also able to derive some previously known
isomorphism conditions from this description, which sheds new light on the classification problem of composition algebras.

\section{Preliminaries}
Throughout the paper $k$ denotes an arbitrary field of characteristic not two.

\subsection{Composition Algebras}
Let $V$ be a finite-dimensional vector space. A \emph{quadratic form} on $V$ is a map $n:V\to k$ satisfying $n(\alpha
x)=\alpha^2n(x)$
for each $\alpha\in k$ and $x\in V$, and such that the map $b_n:V\times V\to k$ defined by 
\[b_n(x,y)=n(x+y)-n(x)-n(y)\]
is bilinear. The pair $(V,n)$ is then called a \emph{quadratic space}. For each subset $U\subseteq V$, we write $U^\bot$ for
the orthogonal complement of $U$
with respect to $b_n$. The form $n$ is said to be
\emph{non-degenerate} if $V^\bot=\{0\}$.

\begin{Def} An \emph{algebra over $k$} or \emph{$k$-algebra} is a $k$-vector space with a bilinear multiplication. A
\emph{composition
algebra} over $k$ is a $k$-algebra endowed with a non-degenerate multiplicative quadratic form $n$.
\end{Def}

We denote the operators of left and right multiplication by an element $a$ in an algebra $A$ by $L_a^A$ and $R_a^A$, respectively.
By definition, the datum of a composition algebra is a triple $(C,\cdot,n)$ where $C$ denotes the vector space, $\cdot$ the
multiplication,
and $n$ the quadratic form. Below we shall subsume the multiplication, and often the quadratic form as well, in the notation,
thus 
writing $C$ or $(C,n)$ for $(C,\cdot,n)$.

\begin{Rk}\label{R: generalized} In \cite{CKT} the term \emph{normalized composition} is used for the structure of what is here
called a
composition algebra, and the term \emph{composition} is used for a more general structure, which we will call \emph{generalized
composition algebras}. In this sense, a generalized composition algebra over $k$ is a $k$-algebra $A$ endowed with a
non-degenerate quadratic form $n$ for which there exists $\lambda\in k^*$, called the \emph{multiplier} of $A$, with
$n(xy)=\lambda n(x)n(y)$ for all $x,y\in A$. (Thus a generalized composition algebra with multiplier $\lambda$ is a composition
algebra precisely when $\lambda=1$.) Our focus and terminology agrees with \cite{CEKT}, \cite{KMRT} and large parts of the
literature. At one occasion we shall use generalized composition algebras as an
intermediate step in a proof.
\end{Rk}

A finite-dimensional composition algebra is said to be a \emph{Hurwitz algebra} if it is unital. The following result is due to \cite{Ka}.

\begin{Lma}\label{L: Kaplansky} Let $(C,n)$ be a finite-dimensional composition algebra. Then there exist a Hurwitz algebra $H$
and $f,g\in \ort(n)$
such that $(C,n)=(H_{f,g},n)$.
\end{Lma}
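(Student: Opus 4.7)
The plan is to implement Kaplansky's isotope trick: I will exhibit a Hurwitz multiplication on the vector space underlying $C$ by conjugating the product of $C$ by the left and right multiplication operators $L_a^C$ and $R_a^C$ for a carefully chosen element $a$.

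The first, and subtlest, step is to produce an $a\in C$ with $n(a)=1$. Non-degeneracy of $n$ together with $\ch k\ne 2$ yields some $a_0\in C$ with $\mu:=n(a_0)\ne 0$. Multiplicativity of $n$ gives $n(a_0^{\,2})=\mu^2$, so $a:=\mu^{-1}a_0^{\,2}$ satisfies $n(a)=\mu^{-2}\cdot\mu^2=1$. I single this step out because it is what keeps the argument from breaking down when $\mu$ is not itself a square in $k$; all subsequent isometry assertions depend on having $n(a)=1$ rather than merely $n(a)\ne 0$.

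Once such an $a$ is in hand, $L_a^C$ and $R_a^C$ lie in $\ort(n)$ by multiplicativity, and hence are bijective by finite-dimensionality. I would then transport the product of $C$ to a new multiplication $\cdot_H$ on the same vector space by setting
\[
x\cdot_H y := (R_a^C)^{-1}(x)\cdot (L_a^C)^{-1}(y),
\]
where the right-hand side is computed in $C$. The form $n$ remains multiplicative for $\cdot_H$ since the two isometries on the right preserve $n$, and a direct check using $R_a^C(a)=a^2=L_a^C(a)$ shows that $a^2$ is a two-sided unit for $\cdot_H$; thus $H:=(C,\cdot_H,n)$ is a Hurwitz algebra. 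Inverting the defining relation for $\cdot_H$ via $x=R_a^C(u)$ and $y=L_a^C(v)$ then yields $uv=R_a^C(u)\cdot_H L_a^C(v)$ in $C$, so $(C,n)=(H_{f,g},n)$ with $f=R_a^C$ and $g=L_a^C$ in $\ort(n)$, as required.
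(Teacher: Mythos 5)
Your proof is correct and follows the same route as the paper's: obtain a norm-one element $a$ by squaring an anisotropic element and rescaling, then twist the product of $C$ by $(R_a^C)^{-1}$ and $(L_a^C)^{-1}$ to produce a Hurwitz algebra $H$ with $C=H_{R_a^C,\,L_a^C}$. You are, if anything, slightly more careful than the paper in pinning down the unit: with $H=C_{(R_a^C)^{-1},(L_a^C)^{-1}}$ the unity is indeed $a^2$ (since $R_a^C(a)=a\cdot a=L_a^C(a)$), whereas the paper's statement that the unity is $e$ itself appears to be a small slip.
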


For any algebra $A$ and any pair $(f,g)$ of linear operators on $A$, the \emph{isotope} $A_{f,g}$ is the algebra with underlying space $A$ and
multiplication
\[x\cdot y=f(x)g(y),\]
where juxtaposition denotes the multiplication of $A$. A
definition of the orthogonal group $\ort(n)$ is given in Section 1.4 below.

Indeed, if $(C,n)$ is a finite-dimensional composition algebra, then the non-degene\-racy of $n$ implies the existence of some
$c\in C$ with $n(c)\neq 0$. Setting $e=n(c)^{-1}c^2$ and $H=C_{(R_e^C)^{-1},(L_e^C)^{-1}}$ we see that $(H,n)$ is a Hurwitz
algebra with unity $e$, and $C=H_{f,g}$ with $f=R_e^C$ and $g=L_e^C$. These belong to $\ort(n)$ as $n(e)=1$ and $n$
is multiplicative.

Another class of composition algebras of particular interest is that of symmetric composition algebras.

\begin{Def} Let $(C,n)$ be a composition algebra. The bilinear form $b_n$ is called \emph{associative} if, for all
$x,y,z\in C$,
\[b_n(xy,z)=b_n(x,yz).\]
The algebra is called \emph{symmetric} if the bilinear form $b_n$ is associative.
\end{Def}

We will use the same terminology for generalized composition algebras. 

Any Hurwitz algebra $H$ is endowed with a canonical involution $i$, defined by fixing
the unity and acting as $x\mapsto -x$ on its orthogonal complement. This involution is an isometry and the \emph{para-Hurwitz
algebra} $H_{i,i}$ is symmetric. Thus the
following lemma is an immediate consequence of the 
previous lemma.

\begin{Lma}\label{L: symmetric} Let $(C,n)$ be a finite-dimensional composition algebra. Then there exist a symmetric
composition algebra $S$
and $f,g\in \ort(n)$
such that $(C,n)=(S_{f,g},n)$.
\end{Lma}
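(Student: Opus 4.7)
The plan is to combine Lemma 2.2 with the observation, already recorded in the paragraph preceding the statement, that every Hurwitz algebra $H$ has an associated symmetric para-Hurwitz algebra $H_{i,i}$, where $i$ is the canonical involution of $H$ (an isometry of $n$).

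First I would invoke Lemma 2.2 to write $(C,n)=(H_{f_0,g_0},n)$ for some Hurwitz algebra $H$ and some $f_0,g_0\in\ort(n)$. Denoting the product of $H$ by juxtaposition, this means that the product of $C$ is $x\cdot y=f_0(x)g_0(y)$. Next I would set $S=H_{i,i}$, which is symmetric by the remark preceding the statement, and write its product as $x*y=i(x)i(y)$.

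The key step is to rewrite the product of $C$ through $S$ rather than through $H$. Since $i^2=\Id_H$, for any $u,v\in H$ we have $uv=i(i(u))i(i(v))=i(u)*i(v)$. Applying this with $u=f_0(x)$ and $v=g_0(y)$ gives
\[
x\cdot y=f_0(x)g_0(y)=i(f_0(x))*i(g_0(y))=(i\circ f_0)(x)*(i\circ g_0)(y).
\]
Thus $(C,n)=(S_{f,g},n)$ with $f=i\circ f_0$ and $g=i\circ g_0$. Since $i\in\ort(n)$ and $f_0,g_0\in\ort(n)$, and $\ort(n)$ is closed under composition, both $f$ and $g$ lie in $\ort(n)$, completing the proof.

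There is no substantive obstacle here: once Lemma 2.2 and the fact that para-Hurwitz algebras are symmetric are in hand, the only verification needed is that the isotopy relating $H_{f_0,g_0}$ to $H_{i,i}$ can be absorbed into the isotopy factors $f,g$ while staying inside $\ort(n)$, which is immediate from $i\in\ort(n)$ and $i^2=\Id$.
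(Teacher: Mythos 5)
Your proof is correct and fills in exactly the computation that the paper calls an ``immediate consequence'' of Lemma~\ref{L: Kaplansky} together with the observation that para-Hurwitz algebras are symmetric: writing $C=H_{f_0,g_0}$ and then absorbing $i$ into the isotopy factors to get $C=(H_{i,i})_{if_0,ig_0}$ is precisely the intended argument. The verification that $if_0,ig_0\in\ort(n)$ is the only thing to check, and you do it.
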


When dealing with isotopes of algebras and their isomorphisms, the following easy result is useful. Its proof is a
straightforward verification which we leave out.

\begin{Lma}\label{L: juggling} Let $A$ and $B$ be arbitrary $k$-algebras and let $f$ and $g$ be invertible linear operators on
$A$. If $h:A\to B$ is an isomorphism, then $h f h^{-1}$ and $h g h^{-1}$ are invertible linear operators on $B$,
and
\[h:A_{f,g}\to B_{h f h^{-1},h g h^{-1}}\]
is an isomorphism.
\end{Lma}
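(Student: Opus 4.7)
The plan is a direct verification, essentially bookkeeping. First I would observe that $hfh^{-1}$ and $hgh^{-1}$ are well-defined linear operators on $B$ since $h$ is a linear bijection (being an isomorphism of $k$-algebras) and $f,g$ are linear; their invertibility is immediate because each is a composite of three invertible linear maps, with inverse $hf^{-1}h^{-1}$ and $hg^{-1}h^{-1}$, respectively.

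Next I would check that $h$ intertwines the two isotope multiplications. Denote by $\ast$ the multiplication in $A_{f,g}$ and by $\star$ the multiplication in $B_{hfh^{-1},hgh^{-1}}$, and by juxtaposition the original multiplications of $A$ and $B$. For $x,y \in A$, expand
\[
h(x \ast y) = h\bigl(f(x) g(y)\bigr) = h(f(x))\, h(g(y)),
\]
using that $h$ preserves the original multiplication of $A$. On the other side,
\[
h(x) \star h(y) = (hfh^{-1})(h(x))\, (hgh^{-1})(h(y)) = h(f(x))\, h(g(y)),
\]
so the two expressions agree. Combined with the fact that $h$ is already a $k$-linear bijection between the underlying vector spaces of $A_{f,g}$ and $B_{hfh^{-1},hgh^{-1}}$, this gives the claimed isomorphism.

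There is no real obstacle here: the statement is purely formal, and the only thing worth emphasizing is that the conjugated operators have been tailored precisely so that $h \circ f = (hfh^{-1}) \circ h$ and similarly for $g$, which is exactly what makes the two sides of the multiplication match.
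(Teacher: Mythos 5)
Your proof is correct and is precisely the ``straightforward verification'' that the paper explicitly declines to spell out. The key computation, $h(f(x)g(y)) = h(f(x))h(g(y)) = (hfh^{-1})(h(x))\,(hgh^{-1})(h(y))$, is exactly the intended argument.
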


A natural question to ask is when a finite-dimensional quadratic space $(V,n)$ admits the structure of a composition algebra. It
is known (see e.g.\ \cite[(33.18)]{KMRT}) that this is the case if and only if for some $r\in\{0,1,2,3\}$, $\dim V=d=2^r$
and $n$ is
an $r$-fold Pfister form. We will write $n\in\Pf_r(k)$ to denote that $n$ is an $r$-fold Pfister form over $k$. We also write
$\Comp_d$ for the category of
all $d$-dimensional composition algebras over $k$, where the morphisms are all non-zero
algebra homomorphisms. These are known to be isometries, hence injective by the non-degeneracy of the quadratic forms, whence they are
isomorphisms
for dimension reasons. For each $n\in\Pf_r(k)$ we
write $\Comp(n)$ for the full subcategory of all composition algebras with quadratic form $n$. As in
\cite{CKT} and \cite{CEKT}, we shall consider $\Comp(n)$ for a fixed
quadratic form $n\in\Pf_3(k)$, which is arbitrarily chosen. It may not \emph{a priori} be clear if this is useful in understanding
the category $\Comp_8$, in the sense that composition algebras with different quadratic forms may still be isomorphic. The following known result
shows
that this is not a problem.

\begin{Prp} Let $n,n'\in\Pf_r(k)$ for some $r\in\{0,1,2,3\}$.
\begin{enumerate}
\item If $n$ and $n'$ are not isometric, $C\in\Comp(n)$ and $C'\in\Comp(n')$, then $C$ is not isomorphic to $C'$.
\item If $n$ and $n'$ are isometric, then every $C'\in\Comp(n')$ is isomorphic to some $C\in\Comp(n)$.
\end{enumerate}
\end{Prp}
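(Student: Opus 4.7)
The plan is to exploit the fact, recalled just above the statement, that every morphism of composition algebras is an isometry. Both parts follow from this almost immediately, so I would separate them cleanly.

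For part (1), I would argue by contraposition. Suppose $h:C\to C'$ is an isomorphism in $\Comp_8$. The passage preceding the statement asserts that $h$ is then an isometry between the underlying quadratic spaces $(V,n)$ and $(V',n')$ (where $V$ and $V'$ denote the underlying vector spaces of $C$ and $C'$). In particular $n\cong n'$, contradicting the hypothesis. So no such $h$ can exist.

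For part (2), I would transport structure along a chosen isometry. Fix an isometry $\phi:(V',n')\to(V,n)$, which exists by assumption. Given $C'=(V',\cdot',n')\in\Comp(n')$, I would define a new multiplication on $V$ by
\[x*y=\phi\bigl(\phi^{-1}(x)\cdot'\phi^{-1}(y)\bigr).\]
It is immediate that $\phi$ becomes an algebra isomorphism from $C'$ to $C:=(V,*,n)$. Moreover, for $x,y\in V$,
\[n(x*y)=n\bigl(\phi(\phi^{-1}(x)\cdot'\phi^{-1}(y))\bigr)=n'(\phi^{-1}(x)\cdot'\phi^{-1}(y))=n'(\phi^{-1}(x))n'(\phi^{-1}(y))=n(x)n(y),\]
using successively that $\phi$ is an isometry, the multiplicativity of $n'$ on $C'$, and again the isometry property. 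Hence $C\in\Comp(n)$ and $\phi:C'\to C$ is the required isomorphism.

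There is no real obstacle here; the only point requiring care is to record explicitly that morphisms in $\Comp_d$ are isometries (which has just been established) and to verify that transport of structure along an isometry yields a bona fide object of $\Comp(n)$, which is the short computation displayed above.
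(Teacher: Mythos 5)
Your proof is correct, but for part~(2) you take a genuinely different and more elementary route than the paper. The paper decomposes $C'$ as an orthogonal isotope $H'_{f',g'}$ of a Hurwitz algebra via Lemma~\ref{L: Kaplansky}, invokes~\cite[(33.19)]{KMRT} to produce an isomorphism $h:H'\to H$ of Hurwitz algebras with the two Pfister forms, and then uses the ``juggling'' Lemma~\ref{L: juggling} to carry the isotope data over, landing in $H_{hf'h^{-1},hg'h^{-1}}\in\Comp(n)$. You instead transport the multiplication itself directly along an arbitrary isometry $\phi:(V',n')\to(V,n)$ and check multiplicativity of $n$ by a one-line computation; this bypasses both Kaplansky's theorem and the classification of Hurwitz algebras, and in particular makes the nonemptiness of $\Comp(n)$ automatic rather than something to be quoted from~\cite[(33.18)]{KMRT}. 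The paper's phrasing is natural in context because the Kaplansky decomposition and the juggling lemma are used repeatedly later, but your argument is shorter and self-contained. For part~(1), both you and the paper appeal to essentially the same underlying fact (that a nonzero algebra homomorphism of composition algebras is an isometry, hence forces $n\cong n'$); the paper attributes this to~\cite{Pet}, while you quote the assertion made two sentences earlier in the text, so there is no real difference there --- just be aware that this fact is itself nontrivial and is precisely what the citation is for, so your argument for~(1) is a restatement of the cited result rather than an independent derivation.
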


\begin{proof} The first item is due to \cite{Pet}. For the second, Lemma \ref{L: Kaplansky} provides a Hurwitz algebra
$H'\in\Comp(n')$ and $f',g'\in \ort(n')$ such that $C'=H'_{f',g'}$. The same lemma shows that if $\Comp(n)$ is not empty, then it
contains a Hurwitz algebra $H$. Since $n$ and $n'$ are isometric, we conclude with \cite[(33.19)]{KMRT} that there exists
an isomorphism $h:H'\to H$, which by the above is an isometry. Then the maps $f=h f' h^{-1}$ and $g=h g' h^{-1}$ belong to
$\ort(n)$,
whence $H_{f,g}\in\Comp(n)$, and from $h:H'\to H$ being an isomorphism it follows that
\[h: H'_{f',g'}\to H_{f,g}\]
is an isomorphism.
\end{proof}

For each isometry class $N\subseteq\Pf_3(k)$, we write $\Comp(N)$ for the full subcategory of $\Comp_8$ of all composition
algebras whose quadratic form belongs to $N$. Then the above implies the coproduct decomposition
\[\Comp_8=\coprod_{N\subseteq\Pf_3(k)} \Comp(N),\]
and moreover, for each such class
$N$ and each $n\in N$, the full subcategory $\Comp(n)$ is dense in $\Comp(N)$. Thus the classification problem of $\Comp_8$ may
be treated by fixing one quadratic form at a time, and it then suffices to consider one quadratic form from each isometry class.

Finally, given any
algebra $A$ over $k$ and $\lambda \in k$, the \emph{scalar multiple} $\lambda A$ is defined to be equal to $A$ as a vector space,
with
multiplication
\[x\cdot y=\lambda xy.\]
Using isotopes, this can be formulated by saying that $\lambda A=A_{\lambda\Id,\Id}=A_{\Id,\lambda\Id}$.

\begin{Lma}\label{L: scalars} Let $A$ be a $k$-algebra and $\lambda\in k^*$.
\begin{enumerate}
\item The map $\lambda\Id: \lambda A\to A$ is an isomorphism of algebras.
\item If $A\in \Comp(n)$, then $\lambda A\in \Comp(n)$ if and only if $\lambda=\pm1$.
\end{enumerate}
\end{Lma}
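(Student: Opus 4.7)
For part (1), the plan is simply to verify that $\lambda\Id$ intertwines the two multiplications and then observe that it is invertible. Writing $\ast$ for the multiplication of $\lambda A$, one has $x\ast y=\lambda(xy)$ by definition. Then
\[
\lambda\Id(x\ast y)=\lambda\cdot\lambda(xy)=\lambda^2\,xy=(\lambda x)(\lambda y)=\lambda\Id(x)\cdot\lambda\Id(y),
\]
so $\lambda\Id$ is an algebra homomorphism, and it is bijective because $\lambda\in k^*$.

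For part (2), the key observation is that the underlying vector space and the quadratic form $n$ are unchanged by passing from $A$ to $\lambda A$; only the multiplication changes. So the question is whether $n$ remains multiplicative with respect to $\ast$. A direct computation using the multiplicativity of $n$ on $A$ yields
\[
n(x\ast y)=n(\lambda xy)=\lambda^2 n(xy)=\lambda^2 n(x)n(y)
\]
for all $x,y$. Thus $\lambda A\in\Comp(n)$ iff $\lambda^2 n(x)n(y)=n(x)n(y)$ for all $x,y$. Since $n$ is non-degenerate (indeed anisotropic-free enough to exhibit vectors with $n(x)\neq 0$; in any case non-degeneracy plus $\ch k\neq 2$ guarantees some $x$ with $n(x)\neq 0$), we may choose $x,y$ with $n(x)n(y)\neq 0$ and conclude $\lambda^2=1$, i.e.\ $\lambda=\pm 1$. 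Conversely, if $\lambda=\pm 1$ then $\lambda^2=1$ and the identity above shows $n$ is multiplicative on $\lambda A$.

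Neither part presents any real obstacle; this is a routine verification. The only point that requires a moment's thought is the existence of a vector with $n(x)\neq 0$, which follows from non-degeneracy of $n$ together with the characteristic assumption. Alternatively one may note that this was already used implicitly in the proof of Lemma \ref{L: Kaplansky} via the choice of $c\in C$ with $n(c)\neq 0$.
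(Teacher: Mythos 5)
Your verification is correct and is precisely the routine check the paper alludes to when it says ``the proof is a straightforward verification.'' The only small remark is that the existence of an anisotropic vector already follows from non-degeneracy of $n$ alone (if $n\equiv 0$ then $b_n\equiv 0$), so the characteristic assumption is not actually needed at that step, but this does not affect correctness.
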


The proof is a straightforward verification.

\subsection{Affine Group Schemes}
We shall use the functorial approach to algebraic groups, which is developed in \cite{DG}, \cite{W} and \cite{KMRT}. We
denote by $k\Alg$ the category of all unital commutative
associative algebras over $k$, with algebra homomorphisms as morphisms.\footnote{Despite this notation, we shall not assume
associativity, commutativity or unitality when we use the word \emph{algebra} in general.} An \emph{affine scheme} is a functor
$\mathbf F: k\Alg\to\Set$ which is \emph{representable}, i.e.\ isomorphic to
$\Hom(A_0,-)$ for some $A_0\in k\Alg$. An \emph{affine group scheme} over
$k$
is a functor $\mathbf G: k\Alg\to\Grp$ which is representable in the sense that its composition with the forgetful functor into
$\Set$ is. A \emph{(normal) subgroup functor} of a functor $\mathbf G:k\Alg\to\Grp$ is a functor $\mathbf H: k\Alg\to\Grp$ such
that $\mathbf H(A)$ is a (normal) subgroup of $\mathbf G(A)$ for each $A\in k\Alg$.

Given a functor $\G:k\Alg\to\Grp$, we write $\Aut(\G)(k)$ for the group of automorphisms of $\G$ defined over $k$. Here, we
understand an \emph{automorphism of $\G$ defined over $k$}, or briefly an \emph{automorphism of $\G$} to be a natural
transformation $\bm\eta$ from $\G$ to itself such that for each
$A\in k\Alg$, the map $\bm\eta_A:\G(A)\to \G(A)$ is a group automorphism.
Each automorphism of $\G$ thus in particular induces an automorphism of the group $G:=\G(k)$. Inner automorphisms of $G$ can
conversely be lifted to automorphisms of $\G$. Indeed, if $g\in G$ and $A\in k\Alg$, then the image under $\G$ of the
inclusion $\iota_A: k\to A, \alpha\mapsto\alpha1$, is a group homomorphism, and the map
\[(\bm{\kappa}_g)_A: \G(A)\to \G(A), h\mapsto \G(\iota_A)(g)h\G(\iota_A)(g)^{-1}\]
is a group automorphism of $\G(A)$. The following fact is clear.

\begin{Lma}\label{L: lift} Let $\G: k\Alg \to \Grp$ be a functor, and let $g\in G$. Then the map $\bm{\kappa}_g: \G \to \G$ given,
for
each $A\in k\Alg$, by $(\bm{\kappa}_g)_A$, is an automorphism of $\G$. If $\mathbf H$ is a normal
subgroup functor of $\G$, then
$\bm{\kappa}_g$ restricts to an automorphism of $\mathbf H$, and for
any $r\in \N$,
the group $G$ acts on $\left(\Aut(\mathbf H)(k)\right)^r$ by
\[g\cdot (\bm\alpha_1,\ldots,\bm\alpha_r)=(\bm\kappa_g \bm\alpha_1\bm\kappa_g^{-1},\bm\kappa_g \bm\alpha_r\bm\kappa_g^{-1}).\]
\end{Lma}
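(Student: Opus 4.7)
The plan is to verify the three parts of the lemma in sequence; all reduce to standard conjugation identities together with the functoriality of $\G$. The author's assertion that the lemma is ``clear'' is fair, and there is no real obstacle---the only point requiring any care is naturality in the argument $A\in k\Alg$, where one must use that morphisms in $k\Alg$ are unital so that they intertwine with the inclusions $\iota_A$.

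For the first assertion, I would proceed in two steps. First, for a fixed $A\in k\Alg$, the map $(\bm\kappa_g)_A$ is conjugation in the group $\G(A)$ by the element $\G(\iota_A)(g)$ and is therefore a group automorphism of $\G(A)$, with inverse conjugation by $\G(\iota_A)(g^{-1})$ (using only that $\G(\iota_A)$ is a group homomorphism). Second, for naturality, let $\varphi:A\to B$ be a morphism in $k\Alg$; unitality gives $\varphi\circ\iota_A=\iota_B$, so applying $\G$ yields $\G(\varphi)\bigl(\G(\iota_A)(g)\bigr)=\G(\iota_B)(g)$. Since $\G(\varphi)$ is a group homomorphism it intertwines conjugation by the source element with conjugation by its image, which is precisely the commutation of the naturality square $\G(\varphi)\circ(\bm\kappa_g)_A=(\bm\kappa_g)_B\circ\G(\varphi)$.

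For the second assertion, since $\mathbf H$ is a normal subgroup functor of $\G$, each $\mathbf H(A)$ is by definition stable under conjugation by any element of $\G(A)$, hence in particular under $(\bm\kappa_g)_A$; the restriction to $\mathbf H$ therefore yields a group automorphism at each level, and naturality is inherited unchanged from the first part. For the third assertion, the structural fact I would isolate is that $g\mapsto\bm\kappa_g$ is a group homomorphism $G\to\Aut(\mathbf H)(k)$. This holds because $\G(\iota_A)$ is a group homomorphism for every $A$, so levelwise $(\bm\kappa_{gh})_A=(\bm\kappa_g)_A(\bm\kappa_h)_A$ and $\bm\kappa_{1_G}$ is the identity natural transformation. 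Consequently, simultaneous conjugation by $\bm\kappa_g$ on an $r$-tuple defines a left action of $G$ on $\bigl(\Aut(\mathbf H)(k)\bigr)^r$, as claimed.
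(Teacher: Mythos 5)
Your proof is correct and supplies exactly the routine verifications that the paper dismisses with ``the following fact is clear'': it gives no written proof. The one point genuinely worth spelling out---that naturality of $\bm\kappa_g$ rests on the unitality of morphisms in $k\Alg$, so that $\varphi\circ\iota_A=\iota_B$ and hence $\G(\varphi)$ carries $\G(\iota_A)(g)$ to $\G(\iota_B)(g)$---you identify and handle properly, and the observation that $g\mapsto\bm\kappa_g$ is a group homomorphism is the right way to package the action claim.
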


Automorphisms of the form $\bm\kappa_g$ for some $g\in G$ of $\G$ are called \emph{inner}, and automorphisms which are
not inner are called \emph{outer}. We write $\Inn(\G)(k)$ for the subgroup of $\Aut(\G)(k)$ consisting of all inner automorphisms.
If $\mathbf H$ is a normal subgroup functor of $\G$, we call
automorphism of $\mathbf H$ \emph{weakly inner} (with respect to $\G$) if it is equal to $\bm\kappa_g$ for some $g\in G$, and
\emph{strongly outer} otherwise.

\subsection{Groups and Group Schemes of Quadratic Forms}
To keep our presentation reasonably self-contained, we will in this section give an introduction to
those groups and group schemes that will be needed in the present paper. Our summary is based on \cite{KMRT} and \cite{CKT}, the
former of which contains an extensive account of the theory of groups and group schemes related to quadratic (and other) forms.

Let $(V,n)$ be a non-degenerate quadratic space. A \emph{similarity} of $(V,n)$ is a linear map $f:V\to V$ such that there exists
$\mu(f)\in k^*$ with
\[n(f(x))=\mu(f)n(x)\]
for all $x\in V$. Similarities form a group denoted by $\go(n)$. For each $f\in \go(n)$, the scalar $\mu(f)$ is
called the \emph{multiplier} of $f$, and the assignment $f\mapsto \mu(f)$ defines a group homomorphism $\mu: \go(n)\to k^*$. The
kernel
of $\mu$ is the \emph{orthogonal group} $\ort(n)$, consisting of all isometries of $V$ with respect to $n$. On the other hand,
there is a monomorphism in the other direction, i.e.\ from $k^*$ to $\go(n)$, given by $\lambda\mapsto \lambda\Id$. Abusing
notation, we write $k^*$ for its image, which is a central, hence normal, subgroup, and define $\pgo(n)=\go(n)/k^*$. In the
category of groups, we therefore have the two exact sequences
\[1\to \ort(n) \to  \go(n) \xrightarrow{\mu} k^* \to 1\]
and
\[1\to k^* \to  \go(n) \to \pgo(n) \to 1.\]

On the level of group schemes, we will mainly be concerned with $\PGO(n)$. To define it, we use the notion of the adjoint
involution of the strictly non-degenerate quadratic form $n$. This is an anti-automorphism $\ad_n$ of $\End_kV$ defined
by
\[b_n(f(x),y)=(x,\ad_n(f)(y)).\]
Thus $(\End_kV,\ad_n)$ is an algebra with involution, and we then define $\PGO(n)$ as the automorphism group scheme of this
algebra, i.e.\ for each $A\in k\Alg$,
\[\PGO(n)(A)=\left\{\psi\in\aut_A(A\otimes\End_kV)|\psi(\ad_n)_A=(\ad_n)_A\psi\right\},\]
where $(\ad_n)_A$ is defined on $A\otimes\End_kV$ by extending $\ad_n$.

From \cite[\S 23]{KMRT}, we know that $\PGO(n)\simeq \GO(n)/\G_m$, where $\GO(n)$ is the group scheme of similarities of $n$, with
$\GO(n)(k)=\go(n)$, and $\G_m$ is multiplicative group scheme, with $\G_m(k)=k^*$.
Note however that this is a quotient of group schemes, and does not imply that $\PGO(n)(A)\simeq\GO(N)(A)/\G_m(A)$ 
for each $A$ in $k\Alg$. This is however true for $A=k$. Indeed, the Skolem--Noether theorem implies that
each $k$-algebra automorphism of $\End_kV$ is inner, and it is then straightforward to check that an inner automorphism $g\mapsto
fgf^{-1}$ of $\End_kV$ commutes with $\ad_n$ if and only if $f\in\go(n)$. This defines a surjective group homomorphism from
$\go(n)$ to $\PGO(n)(k)$, and as $\End_kV$ is central, the kernel of this homomorphism is $k\Id$. Thus we get an isomorphism of
groups
\[\begin{array}{ll}\pgo(n)\to \PGO(n)(k)& [f]\mapsto \left(g\mapsto fgf^{-1}\right).  \end{array}\]

\begin{Rk}\label{R: identification} Following custom, we will henceforth identify $\PGO(n)(k)$ with $\pgo(n)$ in view of the above
isomorphism. Thus for each
$h\in\go(n)$ we identify the inner automorphism $f\mapsto hfh^{-1}$ with the coset $[h]$ of $h$ in $\pgo(n)$.
\end{Rk}

We finally write $\PGO^+(n)$ for the connected component of the identity of $\PGO(n)$. Under the identification above, its group of rational points corresponds
to $\pgo^+(n)=\go^+(n)/k^*$. To define $\go^+(n)$, let $C(V,n)$
be the Clifford algebra of the form $n$, with even part denoted by $C_0(V,n)$. The canonical involution of $C(V,n)$ which induces
the identity map on $V$, is
denoted by $\sigma$. Then it is known (see e.g.\ \cite[\S 13]{KMRT}) that each $f\in\go(n)$ induces a automorphism $C(f)$
of $C_0(V,n)$,
and that the restriction of $C(f)$ to the centre of $C_0(V,n)$ either is the identity or has order two. The similarity $f\in
\go(n)$
is called proper if $C(f)$
induces the identity on the centre, and the set $\go(n)^+$ of all proper similarity forms a normal subgroup of index two of
$\go(n)$. We also write
$\ort^+(n)=\go^+(n)\cap\ort(n)$ for the group of proper isometries of $n$.

\subsection{Triality for Unital and Symmetric Composition Algebras} The Principle of Triality, originating from E.\ Cartan
\cite{C}, is the following statement.

\begin{Prp} Let $n\in\Pf_3(k)$ and let $H\in\Comp(n)$ be a Hurwitz algebra. For each $h\in \ort^+(n)$ there exist
$h_1,h_2\in\ort^+(n)$ such that for all $x,y\in H$,
\[h(xy)=h_1(x)h_2(y),\]
and the pair $(h_1,h_2)$ is unique up to multiplication by $\lambda$ in the first argument and $\lambda^{-1}$ in the second
for some $\lambda\in k^*$.
\end{Prp}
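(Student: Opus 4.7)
The plan is to prove existence and uniqueness separately, with existence being the substantive half.

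For existence, let $T \subseteq \ort^+(n)$ denote the subset of those $h$ for which such a pair $(h_1, h_2)$ exists. The composition identity
\[(hh')(xy) = h\bigl(h_1'(x) h_2'(y)\bigr) = (h_1 h_1')(x) \cdot (h_2 h_2')(y)\]
shows that $T$ is closed under composition, and an analogous argument handles inverses; hence $T$ is a subgroup of $\ort^+(n)$. It therefore suffices to exhibit a generating subset of $\ort^+(n)$ inside $T$. The natural source of such generators is the Moufang identities in the alternative algebra $H$: for any $a \in H$ with $n(a) = 1$, the identity $(ax)(ya) = a(xy)a$ places the triple $(U_a, L_a, R_a)$ in $T$, where $U_a(z) = aza$. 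Combined with products and conjugation by the canonical involution $i$ of $H$, such triples generate $\ort^+(n)$, which can be verified through a Cartan--Dieudonn\'e-type argument combined with the index-two inclusion $\ort^+(n) \subseteq \ort(n)$. A separate verification shows the exhibited generators lie in $\ort^+(n)$ rather than merely in $\ort(n)$; this is a Clifford-algebraic condition.

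For uniqueness, substituting $x \mapsto h_1^{-1}(x)$ and $y \mapsto h_2^{-1}(y)$ reduces the claim to showing: if $xy = p(x) q(y)$ for all $x, y \in H$ with $p, q$ suitable isometries of $n$, then $(p, q) = (\lambda \Id, \lambda^{-1} \Id)$ for some $\lambda \in k^*$. Setting $y = e$ and then $x = e$, and using $n(p(e)) = n(q(e)) = 1$, identifies $p$ as right multiplication by $u := p(e)$ and $q$ as left multiplication by $u^{-1}$. The residual identity $xy = (xu)(u^{-1}y)$, expanded in the decomposition $u = \alpha e + u_0$ with $u_0 \in e^\perp$, collapses via the Moufang identities to a non-trivial associator term in $[x, u_0, y]$; this vanishes identically only when $u_0 = 0$, so $u \in k^*\, e$, giving the desired $\lambda$.

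The main obstacle I foresee is the generator step in existence: bridging from the explicit Moufang triples to all of $\ort^+(n)$ requires tracking propriety, a Clifford-algebraic notion not visible from the norm alone, and this bookkeeping is where the classical proofs of triality invest their effort. A more conceptual alternative would be to construct the triality directly on the affine group scheme corresponding to $\mathrm{Spin}(n)$ as an outer automorphism of order three, but a self-contained argument through the algebra $H$ requires careful Clifford-algebraic analysis.
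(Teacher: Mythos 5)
The paper itself does not prove this proposition---it is quoted from Springer--Veldkamp as background---so there is no internal argument to compare against. That said, there is a genuine gap in your existence half, and it is worth spelling out because it is not the difficulty you flag. Your uniqueness half, by contrast, is correct: normalizing to $xy = p(x)q(y)$ does give $p = R_u$ and $q = L_u^{-1}$ with $u = p(e)$, and substituting $y \mapsto uz$ reduces the residual identity to $x(uz) = (xu)z$ for all $x,z$, i.e.\ $u$ lies in the (middle) nucleus of $H$, which is $ke$ in dimension $8$; this is exactly the associator computation you describe.

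For existence, the Moufang triple with $n(a)=1$ gives $U_a = L_aR_a$, and one checks that $U_a = s_a s_e$, a product of two hyperplane reflections. Its spinor norm is therefore $n(a)n(e) \equiv 1 \pmod{(k^*)^2}$, so the subgroup $T$ you generate lands inside the kernel of the spinor norm on $\ort^+(n)$. Conjugation by $i$ does not enlarge it, since $iU_ai = U_{\bar a}$ and $n(\bar a)=n(a)$. Whenever the spinor norm is nontrivial on $\ort^+(n)$---e.g.\ over $\mathbb Q$ with $n$ the sum of eight squares, where $a$ of norm $2$ gives spinor norm $2 \notin (\mathbb Q^*)^2$, or for the split form over any field that is not quadratically closed---$T$ is a proper subgroup of $\ort^+(n)$, so the Cartan--Dieudonn\'e-style generation claim fails. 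The obstruction is not propriety (detecting $\ort^+$ inside $\ort$) but the spinor norm (detecting the reduced orthogonal group inside $\ort^+$), a different Clifford-theoretic invariant. In fact the obstruction is already visible in the statement: for $h = s_a s_e$ the triality pair is, up to the allowed scalar, $(L_a,\, n(a)^{-1}R_a)$, and the multiplier class $n(a) \bmod (k^*)^2$---precisely the spinor norm of $h$---is the exact obstruction to rescaling both components into $\ort^+(n)$. This is why the version of triality the paper actually uses downstream (Lemma 3.1, via Proposition 2.10, both attributed to Chernousov--Knus--Tignol) places $h$ and its components in $\go^+(n)$, proper \emph{similarities}, so the scalar ambiguity can absorb the multiplier. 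If you want to keep the algebraic Moufang route rather than working through the Clifford algebra directly, the fix is to drop the normalization $n(a)=1$, use the similarity triple $(U_a, L_a, R_a)$ for arbitrary anisotropic $a$, and prove the $\go^+(n)$ version of the statement.
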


A proof and an elaborate discussion can be found in \cite{SV}.

In \cite{CKT} and \cite{CEKT}, triality was discussed using symmetric, rather than unital, composition algebras. We recall some of
their main results.

\begin{Prp}\label{P: CKT} Let $S$ be a finite-dimensional symmetric generalized composition algebra over $k$, with
quadratic form $n\in\Pf_3(k)$. 
\begin{enumerate}[(i)]
\item For each $h\in \go^+(n)$ there exist $h_1,h_2\in \go^+(n)$ such that for any $x,y\in S$,
\[h(xy)=h_1(x)h_2(y).\]
The pair $([h_1],[h_2])\in\pgo^+(n)$ is determined uniquely by $h$.
\item For each $r\in\{1,2\}$, the map $\rho_r^S:[h]\mapsto[h_r]$ is an outer automorphism of $\pgo^+(n)$ of order
three, and induces an automorphism $\bm\rho_r^S$ of
$\PGO^+(n)$ defined over $k$. Moreover, $\bm\rho_1^S\bm\rho_2^S=\ID$.
\item The assignment $S\mapsto \bm\rho_1^S$ defines a bijection between eight-dimensional symmetric generalized
composition algebras of dimension eight up to scalar multiples,
and outer automorphisms of
$\PGO^+(n)$ of order three defined over $k$. 
\item For any symmetric generalized composition algebra $T$ with quadratic form $n$ and any $h\in\go(n)$, 
$\kappa_{[h]}\rho_1^S\kappa_{[h]}^{-1}=\rho_1^T$ if and only if there exists $\lambda\in
k^*$ such that $\lambda h$ is
an isomorphism from $S$ to $T$.
\end{enumerate}
\end{Prp}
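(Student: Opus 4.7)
The plan is to closely follow the proofs in \cite{CKT} and \cite{CEKT}, where the statement is established; below I indicate the overall strategy. For (i), the existence of the factorisation $h(xy)=h_1(x)h_2(y)$ is the symmetric-algebra analogue of the classical principle of triality of E.\ Cartan stated just above for Hurwitz algebras. One approach is to first handle the para-Hurwitz case: given a Hurwitz algebra $H$ with form $n$ and $h\in \go^+(n)$, Cartan triality applied to $H$ yields $g_1,g_2\in \go^+(n)$ with $h(xy)=g_1(x)g_2(y)$, and conjugating through the canonical involution $i$ delivers the required factorisation in $H_{i,i}$ with $h_r=ig_ri$. The remaining symmetric case (Okubo type) is then treated either uniformly by a Clifford-algebra argument or by faithfully flat descent to the para-Hurwitz setting. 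Uniqueness of $([h_1],[h_2])$ modulo the ambiguity $(\lambda\Id,\lambda^{-1}\Id)$ reduces to showing that any $f,g\in\gl(S)$ satisfying $f(x)\cdot g(y)=x\cdot y$ for all $x,y\in S$ are mutually inverse scalars, which is a direct verification from the non-degeneracy of $n$ and the associativity of $b_n$.

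For (ii), well-definedness of $\rho_r^S$ on $\pgo^+(n)$ is immediate from the uniqueness obtained in (i), and the homomorphism property is proved by applying (i) to a product $hh'\in\go^+(n)$ and comparing with the individual factorisations for $h$ and $h'$. Outerness and the order-three property are the content of classical triality: the order-three outer automorphisms of $\PGO^+(n)$ realise the cyclic permutation of the three natural $8$-dimensional representations of the spin group, corresponding to the symmetric-group action on the Dynkin diagram of type $D_4$, and each $\rho_r^S$ is identified with one of them. Extending $\rho_r^S$ to an automorphism $\bm\rho_r^S$ defined over $k$ is achieved by carrying out the same construction with $A\otimes_k S$ in place of $S$ for each $A\in k\Alg$ and checking naturality in $A$. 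Finally, $\bm\rho_1^S\bm\rho_2^S=\ID$ follows by iterating the triality relation and invoking the uniqueness from (i).

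Parts (iii) and (iv) are proven in \cite{CKT} and refined in \cite{CEKT}. For surjectivity in (iii) the construction is intrinsic: starting from a trialitarian automorphism $\bm\rho$ of $\PGO^+(n)$, a symmetric composition multiplication can be produced on the underlying $8$-dimensional space so that $\bm\rho$ becomes its $\bm\rho_1^S$; injectivity modulo scalar multiples then combines the uniqueness from (i) with Lemma \ref{L: scalars}(ii). For (iv), Lemma \ref{L: juggling} translates the equality $\kappa_{[h]}\rho_1^S\kappa_{[h]}^{-1}=\rho_1^T$ into an identification of $T$ with an $h$-twisted copy of $S$, after which Lemma \ref{L: scalars}(i) adjusts by the scalar $\lambda$ that makes $\lambda h$ the actual algebra isomorphism.

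The main obstacle is the outerness and order-three assertion in (ii) together with surjectivity in (iii); both rest on the structure of $\PGO^+(n)$ as an adjoint group of type $D_4$ and on classical triality, and it is here that I would depend most directly on the cited literature.
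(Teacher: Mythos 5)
The paper does not prove this proposition. The sentence immediately preceding it reads ``We recall some of their main results,'' and the statement is imported from \cite{CKT} and \cite{CEKT} with no in-paper argument; so there is no internal proof to measure your sketch against. Your proposal, which you correctly frame as an outline of the proofs in those references rather than a self-contained argument, is broadly consistent with how the result is established there: (i) for para-Hurwitz algebras by conjugating Cartan triality through the canonical involution and then passing to the remaining symmetric (Okubo) case, (ii) by deriving the homomorphism property from uniqueness and appealing to the outer automorphism group of a group of type $D_4$, (iii) and (iv) by the constructions of \cite{CKT} and \cite{CEKT}.

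Two small caveats on the parts you do spell out. In the uniqueness step of (i), showing that $f,g\in\gl(S)$ with $f(x)\cdot g(y)=x\cdot y$ must be mutual-inverse scalars is not purely a matter of ``non-degeneracy of $n$ and associativity of $b_n$'': one also uses that products $xy$ span $S$, which is where the composition property (bijectivity of $L^S_a$ and $R^S_a$ for anisotropic $a$) enters. And in (iv), Lemma~\ref{L: juggling} only gives the easy direction --- an isomorphism $h$ produces conjugate isotopes, hence conjugate triality pairs. The substantive implication, that $\kappa_{[h]}\rho_1^S\kappa_{[h]}^{-1}=\rho_1^T$ forces some $\lambda h$ to be an algebra isomorphism, is precisely the content of the cited result and cannot be extracted from Lemma~\ref{L: juggling} together with Lemma~\ref{L: scalars} alone; indeed the present paper later invokes Proposition~\ref{P: CKT}(iv) itself (rather than re-deriving it) exactly for this hard direction.
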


From \cite[\S 35]{KMRT} we have the following description of $\Aut\left(\PGO^+(n)\right)(k)$.

\begin{Prp}\label{P: KMRT} Let $n\in\Pf_3(k)$ and let $S\in \Comp(n)$ be symmetric. Then 
\[\Aut\left(\PGO^+(n)\right)(k)=\left\{\bm\kappa_{[g]}\left(\bm\rho_1^S\right)^r|g\in\go(n)\wedge r\in\{0,\pm1\}
\right\}.\]
Moreover, the group $\Aut\left(\PGO^+(n)\right)(k)/\Inn\left(\PGO^+(n)\right)(k)$ is generated by the cosets of
$\bm\rho_1^S$ and $\bm\kappa_{[g]}$ for any $g\in\go(n)\setminus\go^+(n)$, and is isomorphic to $S_3$.
\end{Prp}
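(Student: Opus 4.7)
The plan is to assemble the statement from Proposition~\ref{P: CKT} together with the classical fact that $\PGO^+(n)$ is an adjoint algebraic group of type $D_4$, whose group of outer automorphisms injects into the symmetry group $S_3$ of the Dynkin diagram. First, I would verify that the listed maps actually belong to $\Aut(\PGO^+(n))(k)$: Proposition~\ref{P: CKT}(ii) handles $\bm\rho_1^S$ and its powers, while for $\bm\kappa_{[g]}$ with $g\in\go(n)$ one applies Lemma~\ref{L: lift}, noting that $\PGO^+(n)$ is a normal subgroup functor of $\PGO(n)$ (being the connected component of the identity), so every $\bm\kappa_{[g]}\in\Aut(\PGO(n))(k)$ restricts to an element of $\Aut(\PGO^+(n))(k)$.

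Next I would analyse the images of these automorphisms in the quotient $\Aut(\PGO^+(n))(k)/\Inn(\PGO^+(n))(k)$. By Proposition~\ref{P: CKT}(ii) the image of $\bm\rho_1^S$ has order three. For $g\in\go(n)\setminus\go^+(n)$, the image of $\bm\kappa_{[g]}$ has order two: indeed $g^2\in\go^+(n)$, so $\bm\kappa_{[g]}^2=\bm\kappa_{[g^2]}$ is inner under the identification of Remark~\ref{R: identification}, whereas $\bm\kappa_{[g]}$ itself is not inner since its restriction to $\PGO^+(n)(k)$ is conjugation by $[g]\notin\pgo^+(n)$. A further application of Proposition~\ref{P: CKT}(iv) shows that conjugating $\bm\rho_1^S$ by $\bm\kappa_{[g]}$ yields $\bm\rho_1^T$ for some symmetric composition algebra $T$ which, by part (iii) of the same proposition, need not be a scalar multiple of $S$, so the two cosets do not commute. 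The subgroup of the outer automorphism group they generate is therefore non-abelian, hence contains a copy of $S_3$.

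The main obstacle is to show that there are no further outer automorphisms, i.e.\ that $\Aut(\PGO^+(n))(k)/\Inn(\PGO^+(n))(k)$ is exactly $S_3$ rather than a strictly larger group. This is where I would invoke the structural result from \cite[\S 35]{KMRT}: for adjoint groups of type $D_4$ the group of outer automorphisms embeds into the Dynkin diagram automorphism group, which is $S_3$. Combined with the previous paragraph, equality follows, and the six cosets of $\Inn(\PGO^+(n))(k)$ are represented by $\bm\kappa_{[g_0]}^a(\bm\rho_1^S)^r$ with $a\in\{0,1\}$ and $r\in\{0,\pm1\}$, for any fixed $g_0\in\go(n)\setminus\go^+(n)$.

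Finally, I would conclude by absorbing inner automorphisms into the first factor. Every element of $\Inn(\PGO^+(n))(k)$ is of the form $\bm\kappa_{[h]}$ for some $h\in\go^+(n)$, so composing such a $\bm\kappa_{[h]}$ with a representative $\bm\kappa_{[g_0]}^a(\bm\rho_1^S)^r$ produces $\bm\kappa_{[hg_0^a]}(\bm\rho_1^S)^r$, which has the desired form with $g=hg_0^a\in\go(n)$ and $r\in\{0,\pm1\}$. This yields the claimed description of $\Aut(\PGO^+(n))(k)$ and the $S_3$ structure of its quotient by inner automorphisms.
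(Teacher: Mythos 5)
The paper does not prove this proposition at all: it is presented as a direct citation from \cite[\S 35]{KMRT} (``From \cite[\S 35]{KMRT} we have the following description\ldots''), so there is no in-text proof to compare against. Your reconstruction, which assembles the statement from Proposition~\ref{P: CKT} together with the KMRT structure theory of automorphisms of adjoint groups of type $D_4$, is a reasonable rendering of what the citation packs into one sentence, and your final absorption of inner automorphisms into the $\bm\kappa_{[g]}$ factor is exactly the right way to pass from the coset description to the set-theoretic description of $\Aut(\PGO^+(n))(k)$.

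That said, two steps as written are not airtight. First, the non-commutativity argument is both flawed and unnecessary: ``$T$ \emph{need not be} a scalar multiple of $S$'' does not establish that the cosets fail to commute, and you never actually exhibit a $g$ for which they do not. Fortunately you do not need this: once you know the outer automorphism group embeds in $S_3$ (the Dynkin diagram argument) and contains elements of orders $2$ and $3$, Lagrange forces the image to have order divisible by $6$, hence to be all of $S_3$; non-commutativity is then a consequence, not an input. Second, the claim that $\bm\kappa_{[g]}$ is not inner for $g\in\go(n)\setminus\go^+(n)$ ``since its restriction is conjugation by $[g]\notin\pgo^+(n)$'' is incomplete: one must rule out that conjugation by $[g]$ coincides with conjugation by some $[h]\in\pgo^+(n)$, which requires the fact (used elsewhere in the paper, e.g.\ in the proof of Lemma~\ref{L: independent}) that the centralizer of $\pgo^+(n)$ in $\pgo(n)$ is trivial. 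With those two repairs the argument is correct, though it ultimately leans on the same external input from \cite[\S 35]{KMRT} that the paper simply cites.
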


The next lemma shows that automorphisms of $\PGO^+(n)$ are determined by their action on rational points.

\begin{Lma}\label{L: independent} Let $n\in\Pf_3(k)$ and $\bm\alpha\in\Aut\left(\PGO^+(n)\right)(k)$. If $\bm\alpha$ induces the
identity on $\pgo^+(n)$, then $\bm\alpha$ is the identity in $\Aut\left(\PGO^+(n)\right)(k)$.
\end{Lma}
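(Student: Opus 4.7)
The plan is to apply Proposition \ref{P: KMRT} and analyse the resulting equation in $\aut(\pgo^+(n))$. Fix a symmetric $S \in \Comp(n)$. By that proposition, there exist $g \in \go(n)$ and $r \in \{0,\pm 1\}$ such that $\bm\alpha = \bm\kappa_{[g]}(\bm\rho_1^S)^r$. Under the identification of Remark \ref{R: identification}, the automorphism of $\pgo^+(n)$ induced by $\bm\alpha$ sends $[h]$ to $[g](\rho_1^S)^r([h])[g]^{-1}$, so the hypothesis that $\bm\alpha$ restricts to the identity on $\pgo^+(n)$ is equivalent to
\[(\rho_1^S)^r([h]) = [g]^{-1}[h][g] \quad \text{for every } [h] \in \pgo^+(n),\]
that is, to the equality of $(\rho_1^S)^r$ with conjugation by $[g]^{-1}$ in $\aut(\pgo^+(n))$.

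The first step is to rule out $r = \pm 1$ by passing to the quotient by inner automorphisms of $\pgo^+(n)$. By Proposition \ref{P: CKT}(ii) the map $\rho_1^S$ is outer of order three, so its class has order three in $\aut(\pgo^+(n))/\mathrm{Inn}(\pgo^+(n))$. On the other hand, $\pgo^+(n)$ has index two in $\pgo(n)$, so $[g]^2 \in \pgo^+(n)$ and conjugation by $[g]^2$ is inner in $\pgo^+(n)$; hence the class of conjugation by $[g]^{-1}$ has order dividing two. Since $\gcd(2,3)=1$, both classes must be trivial, so $(\rho_1^S)^r$ is inner, which together with the order of $\rho_1^S$ forces $r=0$.

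It remains to show that if $[g] \in \pgo(n)$ centralizes $\pgo^+(n)$, then $[g] = 1$; this yields $\bm\alpha = \bm\kappa_1 = \ID$. For each $h \in \go^+(n)$ the centralizer condition gives $ghg^{-1} = \lambda_h h$ for some $\lambda_h \in k^*$, and one checks that $h \mapsto \lambda_h$ is a character $\lambda : \go^+(n) \to k^*$. I expect this final step to be the main obstacle, since, unlike the preceding ones, it is not a purely formal consequence of the quoted propositions but relies on the structure of $\go^+(n)$ and its action on $V$. The route I would take is to base change to $\bar k$: the derived subgroup of $\go^+(n)(\bar k)$ is the $\bar k$-points of a perfect almost-simple algebraic group of type $D_4$ acting absolutely irreducibly on $V \otimes_k \bar k$, so $\lambda$ is trivial there, and Schur's lemma applied to $g$ over $\bar k$ gives $g \in \bar k^*\, \mathrm{Id}$, hence $g \in k^*\,\mathrm{Id}$ and $[g]=1$.
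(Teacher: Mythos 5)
Your proof follows essentially the same route as the paper: invoke Proposition \ref{P: KMRT} to write $\bm\alpha = \bm\kappa_{[g]}(\bm\rho_1^S)^r$, force $r=0$ by an inner-versus-strongly-outer contradiction, and then conclude from the triviality of the centralizer of $\pgo^+(n)$ in $\pgo(n)$. Your argument for $r=0$, passing to $\Aut\bigl(\pgo^+(n)\bigr)/\Inn\bigl(\pgo^+(n)\bigr)$ and using that an order-$3$ class cannot equal a class of order dividing $2$, is a clean variant of the paper's, which instead derives $\kappa_{[h^2]} = (\rho_1^S)^r$ (using that $\rho_1^S$ has order three) and notes the left side is inner while the right side would be strongly outer for $r=\pm1$; the two are essentially the same idea. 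For the final step, the paper simply asserts triviality of the centralizer, whereas you sketch a proof. Your sketch has the right mechanism (perfectness of the derived group plus Schur's lemma), but the base change to $\bar k$ as stated has a gap: the character $\lambda$ is only defined on $\go^+(n)(k)$, and to deduce that $g$ commutes with the derived group of $\go^+(n)(\bar k)$ you would need Zariski density of the rational points, which fails when $k$ is finite. One can instead argue over $k$ directly, using that the spinor kernel $\Omega(n)(k)\subseteq\go^+(n)(k)$ is perfect in dimension $\geq 5$ and acts absolutely irreducibly on $V$; but since the paper treats the centralizer fact as known, this is an aside rather than a defect relative to the paper's own proof.
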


\begin{proof} Let $S\in\Comp(n)$ be symmetric. By Proposition \ref{P: KMRT} and Remark \ref{R: identification}, we have
$\bm\alpha=\bm\kappa_{[h]}\left(\bm\rho_1^S\right)^r$ for some $h\in\go(n)$ and $r\in\{0,\pm1\}$. If $\bm\alpha$ induces the
identity on $\pgo^+(n)$, we have $\kappa_{[h]}\left(\rho^S\right)^r=\Id$. Since $\rho^S$ is of order three, this implies
that
\[\kappa_{[h^2]}=\left(\rho_1^S\right)^r,\]
and since the left hand side is an inner automorphism of $\pgo^+(n)$ and $\left(\rho_1^S\right)^{\pm1}$ is not by Proposition
\ref{P: CKT},
we have $r=0$
and thus $\bm\alpha=\bm\kappa_{[h]}$. Since the centralizer of $\pgo^+(n)$ in $\pgo(n)$ is trivial we moreover have $[h]=1$, whence
$\bm\kappa_{[h]}$ is the identity. Altogether $\bm\alpha$ is the identity, as desired.
\end{proof}

\subsection{Group Action Groupoids}
By a \emph{groupoid} we understand any category where all morphisms are isomorphisms. Let $G$ be a group acting on a set
$X$.\footnote{Unless otherwise stated, all group actions are assumed to be from the left.} This gives rise to a groupoid $_GX$ as
follows. The object set of $_GX$ is $X$, and for each $x,y\in X$, the set of morphisms from $x$ to $y$ is
\[_GX(x,y)=\{(x,y,g)\in X\times X\times G|g\cdot x=y\}.\]
When the objects $x$ and $y$ are clear from context, we will denote the morphism $(x,y,g)$ simply by $g$. Group action groupoids
are used to construct \emph{descriptions} of certain groupoids in the sense of \cite{D}, whereby a description of a
groupoid $\mathfrak C$ is an equivalence of categories from a group action groupoid to $\mathfrak C$. Given a description of a
groupoid, classifying it up to isomorphism is then transferred to solving the normal form problem for the group action, i.e.\
constructing a cross-section for its orbits. Our approach in this paper is similar, as it consists of constructing an equivalence
of categories from a
groupoid of algebras to a group action groupoid.

\section{Triality for Eight-Dimensional Composition Algebras}
Throughout this section, we fix a quadratic form $n\in\Pf_3(k)$.

\begin{Lma}\label{L: Triality} Let  $C\in\Comp(n)$. Then for each $h\in \go^+(n)$ there exists
a pair $(h_1,h_2)\in \go^+(n)^2$ such that for each $x,y\in C$,
 \[h(xy)=h_1(x)h_2(y),\]
and the pair $([h_1],[h_2])\in\pgo^+(n)^2$ is unique.
\end{Lma}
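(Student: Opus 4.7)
My plan is to reduce the statement to the analogous triality property for symmetric composition algebras, namely Proposition \ref{P: CKT}(i), via the isotope decomposition of Lemma \ref{L: symmetric}.

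First I would invoke Lemma \ref{L: symmetric} to write $(C,n)=(S_{f,g},n)$ for some symmetric composition algebra $(S,n)$ and some $f,g\in\ort(n)$; throughout, juxtaposition will denote the multiplication of $S$, while $x\cdot y=f(x)g(y)$ denotes the multiplication of $C$. Given $h\in\go^+(n)$, Proposition \ref{P: CKT}(i) applied to $S$ produces $h_1^S,h_2^S\in\go^+(n)$ such that $h(uv)=h_1^S(u)h_2^S(v)$ for all $u,v\in S$, with $([h_1^S],[h_2^S])\in\pgo^+(n)^2$ uniquely determined. My candidate pair for $C$ is then
\[h_1:=f^{-1}h_1^Sf,\qquad h_2:=g^{-1}h_2^Sg.\]
Substituting $u=f(x)$ and $v=g(y)$ in the symmetric identity would yield
\[h(x\cdot y)=h(f(x)g(y))=h_1^S(f(x))h_2^S(g(y))=f(h_1(x))g(h_2(y))=h_1(x)\cdot h_2(y),\]
as required.

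The one point that requires genuine verification for existence is that $h_1$ and $h_2$ lie in $\go^+(n)$ and not merely in $\go(n)$. Since $f,g\in\ort(n)\subseteq\go(n)$ and $\go^+(n)$ is a normal subgroup of $\go(n)$ (as recorded in Subsection 1.3), conjugation by $f$ and by $g$ preserves $\go^+(n)$, so $h_1,h_2\in\go^+(n)$. I expect this to be the main (and essentially only) conceptual obstacle: the analogous approach via Lemma \ref{L: Kaplansky} and the Principle of Triality for Hurwitz algebras would only lift the statement for $h\in\ort^+(n)$, so passing through the symmetric version is what allows one to handle arbitrary similarities.

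For the uniqueness of $([h_1],[h_2])\in\pgo^+(n)^2$, suppose $(h_1',h_2')\in\go^+(n)^2$ also satisfies $h(x\cdot y)=h_1'(x)\cdot h_2'(y)$ for all $x,y\in C$. Unpacking the isotope multiplication and again substituting $u=f(x)$, $v=g(y)$ yields
\[h(uv)=(fh_1'f^{-1})(u)(gh_2'g^{-1})(v)\]
in $S$, where the two operators on the right again lie in $\go^+(n)$ by the same normality argument. The uniqueness clause in Proposition \ref{P: CKT}(i) then forces $([fh_1'f^{-1}],[gh_2'g^{-1}])=([h_1^S],[h_2^S])$ in $\pgo^+(n)^2$, which translates back through conjugation by $f$ and $g$ to $([h_1'],[h_2'])=([h_1],[h_2])$, finishing the proof.
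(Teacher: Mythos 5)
Your proposal is correct and follows essentially the same route as the paper: write $C=S_{f,g}$ via Lemma \ref{L: symmetric}, pull back triality components of $h$ from $S$ by conjugating with $f^{-1}$ and $g^{-1}$, and deduce uniqueness from the uniqueness statement for $S$ in Proposition \ref{P: CKT}(i). You go slightly further than the paper in spelling out that $h_1,h_2\in\go^+(n)$ by normality of $\go^+(n)$ in $\go(n)$, and in running the uniqueness argument forwards and backwards through the conjugation; the paper leaves these points implicit, but they are the same checks.
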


The maps $h_1$ and $h_2$ are called \emph{triality components of $h$ with respect to $C$}.

\begin{proof} If $C$ is symmetric, then the statement follows from Proposition \ref{P: CKT}. In general there exists by Lemma
\ref{L: symmetric} a symmetric $S\in\Comp(n)$ and $f,g\in \ort(n)$ such that $C=S_{f,g}$. Denote the multiplication in $S$ by
juxtaposition and that in $C$ by $\cdot$. Then if $h_1'$ and $h_2'$ are triality components of $h$ with respect to $S$, then
\begin{equation}\label{triality}
h(x\cdot y)=h(f(x)g(y))=h_1'f(x)h_2'g(y)=f^{-1}h_1'f(x)\cdot g^{-1}h_2'g(y),
\end{equation}
whence $h_1=f^{-1}h_1' f$ and $h_2=g^{-1}h_2' g$ are triality components of $h$ with respect to $C$. The uniqueness of
$([h_1],[h_2])$ follows from the uniqueness of $([h_1'],[h_2'])$.
\end{proof}

Note that \eqref{triality} does not use the symmetry of $S$, and arguing as in the above proof, we obtain the
following.

\begin{Lma}\label{L: relation} Let $B\in\Comp(n)$ and $f,g\in \ort(n)$. Then $C=B_{f,g}$ satisfies
\[(\rho_1^B,\rho_2^B)=(\kappa_{[f]}\rho_1^C,\kappa_{[g]}\rho_2^C).\]
\end{Lma}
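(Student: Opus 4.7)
The plan is to rerun the computation \eqref{triality} with $B$ in place of $S$. As the author points out in the remark immediately preceding the lemma, the symmetry of $S$ played no role in that chain of equalities; it was invoked only to appeal to Proposition \ref{P: CKT} for the existence of triality components. Since Lemma \ref{L: Triality} now supplies triality components for arbitrary $B\in\Comp(n)$, the same calculation goes through once we substitute $B$ for $S$.

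Concretely, I would fix an arbitrary $h\in\go^+(n)$ and let $(h_1,h_2)\in\go^+(n)^2$ be triality components of $h$ with respect to $B$, so that $h(xy)=h_1(x)h_2(y)$ in the $B$-multiplication. Writing $\cdot$ for the $C$-multiplication and using $x\cdot y=f(x)g(y)$, I expect
\[h(x\cdot y)=h(f(x)g(y))=h_1f(x)\,h_2g(y)=(f^{-1}h_1 f)(x)\cdot (g^{-1}h_2 g)(y),\]
where the last equality rewrites the juxtaposition (which is $B$-multiplication) back as $\cdot$-multiplication via $a\,b=f(f^{-1}(a))\,g(g^{-1}(b))=f^{-1}(a)\cdot g^{-1}(b)$, applied with $a=h_1f(x)$ and $b=h_2g(y)$. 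This exhibits $(f^{-1}h_1 f,\,g^{-1}h_2 g)$ as triality components of $h$ with respect to $C$.

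To finish, I invoke the uniqueness clause of Lemma \ref{L: Triality}: the computation forces $\rho_1^C([h])=[f^{-1}h_1 f]=\kappa_{[f]}^{-1}([h_1])=\kappa_{[f]}^{-1}\rho_1^B([h])$ and analogously $\rho_2^C([h])=\kappa_{[g]}^{-1}\rho_2^B([h])$. Since $h$ was arbitrary in $\go^+(n)$, composing on the left with $\kappa_{[f]}$ (respectively $\kappa_{[g]}$) yields $\rho_1^B=\kappa_{[f]}\rho_1^C$ and $\rho_2^B=\kappa_{[g]}\rho_2^C$, as claimed. There is no real obstacle here; the only step requiring any care is to keep track of which multiplication (that of $B$ or of $C$) is being used at each stage of the rewriting, and to verify that $f^{-1}h_rf$ and $g^{-1}h_rg$ indeed belong to $\go^+(n)$, which is immediate since $f,g\in\ort(n)\subseteq\go^+(n)$ and $\go^+(n)$ is a group.
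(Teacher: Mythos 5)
Your proof is correct in outline and matches the paper's own argument exactly: the paper itself remarks just before the lemma that the computation labeled \eqref{triality} never used the symmetry of $S$, and your rerun of that computation with $B$ in place of $S$, followed by the uniqueness clause of Lemma \ref{L: Triality}, is precisely what is intended.

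One small but genuine error in your closing sentence: you justify $f^{-1}h_rf,\,g^{-1}h_rg\in\go^+(n)$ by asserting $\ort(n)\subseteq\go^+(n)$, but this inclusion is false. The group $\ort(n)$ consists of all isometries, proper and improper alike; the paper even singles out $\ort^+(n)=\go^+(n)\cap\ort(n)$ as a proper subgroup of $\ort(n)$, and the canonical involution $i$ of a Hurwitz algebra is a standard example of an element of $\ort(n)\setminus\go^+(n)$. The correct reason that $f^{-1}h_rf$ and $g^{-1}h_rg$ land in $\go^+(n)$ is that $\go^+(n)$ is a \emph{normal} subgroup of $\go(n)$ (of index two, as recalled in Section 2.3), so it is closed under conjugation by arbitrary $f,g\in\go(n)$, and in particular by $f,g\in\ort(n)$. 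With that correction your argument is complete and coincides with the paper's.
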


Lemma \ref{L: Triality} defines, for any $C\in\Comp(n)$, two maps
\[\rho_1^C,\rho_2^C:\pgo^+(n)\to\pgo^+(n)\]
by $\rho_r^C([h])=[h_r]$ for each $r\in\{1,2\}$.
These in fact define automorphisms of affine group schemes, as the next proposition shows.

\begin{Prp}\label{P: rho} For each $C\in\Comp(n)$ and each $r\in\{1,2\}$, the map $\rho_r^C$ is a strongly outer automorphism of
$\pgo^+(n)$, and induces an automorphism $\bm\rho_r^C$ of $\PGO^+(n)$ defined over $k$.
\end{Prp}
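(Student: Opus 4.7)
The plan is to reduce to the symmetric case, which is already handled by Proposition \ref{P: CKT}. Concretely, Lemma \ref{L: symmetric} produces a symmetric $S\in\Comp(n)$ and $f_1,f_2\in\ort(n)$ with $C = S_{f_1,f_2}$. Applying Lemma \ref{L: relation} with $B=S$ then gives the key identity
\[
\rho_r^C = \kappa_{[f_r]}^{-1}\,\rho_r^S, \qquad r\in\{1,2\},
\]
from which everything should follow by a bookkeeping argument on the three relevant layers --- rational points, the ambient conjugation by $\pgo(n)$, and scheme-theoretic lifts.

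Next I would produce the scheme-theoretic lift by setting
\[
\bm\rho_r^C := \bm\kappa_{[f_r]}^{-1}\,\bm\rho_r^S,
\]
where $\bm\kappa_{[f_r]}$ is furnished by Lemma \ref{L: lift} (using that $\PGO^+(n)$ is a normal subgroup functor of $\PGO(n)$, so conjugation by $[f_r]\in\pgo(n)$ restricts to an automorphism of $\PGO^+(n)$), and $\bm\rho_r^S$ is the automorphism of $\PGO^+(n)$ supplied by Proposition \ref{P: CKT}. Both factors are defined over $k$, hence so is $\bm\rho_r^C$; and by Remark \ref{R: identification}, it induces precisely $\rho_r^C$ on rational points. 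In particular, $\rho_r^C$ is a group automorphism of $\pgo^+(n)$, and Lemma \ref{L: independent} guarantees that the lift is uniquely determined by its action on rational points, so the construction is independent of the chosen decomposition $C = S_{f_1,f_2}$.

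The only part I would be genuinely cautious about is strong outer-ness of $\rho_r^C$. I would proceed by contradiction: if $\rho_r^C = \kappa_{[h]}$ for some $[h]\in\pgo(n)$, then the key identity yields
\[
\rho_r^S = \kappa_{[f_r]}\kappa_{[h]} = \kappa_{[f_r h]}, \qquad f_r h\in\go(n),
\]
which contradicts Proposition \ref{P: KMRT}: in the quotient $\Aut(\PGO^+(n))(k)/\Inn(\PGO^+(n))(k)\simeq S_3$, the class of $\bm\rho_1^S$ is a non-trivial element of order three, hence distinct from the class of any $\bm\kappa_{[g]}$ with $g\in\go(n)$. No deeper obstacle is expected; the work is essentially a translation of Lemma \ref{L: relation} into the automorphism-group setting, with care needed only in citing the correct flavour of \emph{outer} when invoking Propositions \ref{P: CKT} and \ref{P: KMRT}.
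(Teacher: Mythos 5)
Your proposal is correct and follows essentially the same route as the paper: reduce to a symmetric $S$ via Lemma \ref{L: symmetric}, apply Lemma \ref{L: relation} to obtain $\rho_r^C=\kappa_{[f_r]}^{-1}\rho_r^S$, lift via Lemma \ref{L: lift} and Proposition \ref{P: CKT}, invoke Lemma \ref{L: independent} for independence of the decomposition, and derive strong outerness by contradiction. The only cosmetic difference is in the justification that $\rho_r^S$ is strongly outer (and hence that $\kappa_{[f_r h]}$ cannot equal it): you argue via the order-three coset in the $S_3$ quotient of Proposition \ref{P: KMRT}, whereas the paper notes that the square of $\rho_1^S$ is not inner; both are immediate consequences of the same structural facts.
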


\begin{proof} By Lemma \ref{L: symmetric}, there exist a symmetric
composition algebra $S$ and $f,g\in \ort(n)$ such
that $C=S_{f,g}$, and by Lemma \ref{L: relation},
\begin{equation}\label{rho}
(\rho_1^C,\rho_2^C)=(\kappa_{[f]}^{-1}\rho_1^S,\kappa_{[g]}^{-1}\rho_2^S).
\end{equation}
Proposition \ref{P: CKT}(ii) then implies that $\rho_1^C$ is an automorphism of $\pgo^+(n)$. From the same result we
further know that $\rho_1^S$ is an outer automorphism of $\pgo^+(n)$, and
it is strongly outer as its square is not inner. If $\rho_1^C=\kappa_{[h]}$ for some $h\in \go(n)$, then
\[\rho_1^S=\kappa_{[fh]},\]
contradicting the fact that $\rho_1^S$ is strongly outer. From Proposition \ref{P: CKT} together with Lemma \ref{L: lift} we
deduce that $\kappa_{[f]}^{-1}\rho_1^S$ induces an automorphism $\bm\rho_1^C=\bm\kappa_{[f]}^{-1}\bm\rho_1^S$
of
$\PGO^+(n)$ defined over $k$. Since $\bm\rho_1^C$ acts as $\rho_1^C$ on rational points, Lemma \ref{L:
independent} implies that it is independent of the choice of $S$, $f$ and $g$. The case of $\rho_2^C$ is treated
analogously, and the proof is complete.
\end{proof}

We will prove in the next section that the triality components of a composition algebra of dimension eight essentially carry all
the information about the composition algebra. At this point, we will prove that the triality components detect the property of
being symmetric, in the following sense.

\begin{Lma}\label{L: simultaneous} Assume that $C,D\in\Comp(n)$ and $h\in\go(n)$ satisfy
\[\rho_r^C=\kappa_{[h]}\rho_r^D\kappa_{[h]}^{-1}\]
for each $r\in\{1,2\}$. If $D$ is symmetric, then so is $C$.
\end{Lma}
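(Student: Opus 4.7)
My plan is first to deduce the intermediate identity $\rho_1^C\rho_2^C=\Id$ from the hypothesis, and then to argue that this, combined with a Kaplansky-type decomposition, forces $C$ to be symmetric.

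Since $D$ is symmetric, Proposition~\ref{P: CKT}(ii) gives $\rho_1^D\rho_2^D=\Id$. Combining with the hypothesis yields
\[\rho_1^C\rho_2^C = \kappa_{[h]}\rho_1^D\kappa_{[h]}^{-1}\kappa_{[h]}\rho_2^D\kappa_{[h]}^{-1} = \kappa_{[h]}(\rho_1^D\rho_2^D)\kappa_{[h]}^{-1} = \Id.\]

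Next, by Lemma~\ref{L: symmetric}, I write $C=S_{f,g}$ for some symmetric $S\in\Comp(n)$ and some $f,g\in\ort(n)$. Lemma~\ref{L: relation} then gives $\rho_1^C=\kappa_{[f]}^{-1}\rho_1^S$ and $\rho_2^C=\kappa_{[g]}^{-1}\rho_2^S$, while symmetry of $S$ gives $\rho_2^S=(\rho_1^S)^{-1}$. Substituting all of this into $\rho_1^C\rho_2^C=\Id$ and simplifying leads to the relation
\[\kappa_{[f]} = \rho_1^S\,\kappa_{[g^{-1}]}\,(\rho_1^S)^{-1}\]
between automorphisms of $\PGO^+(n)$.

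From here the plan is to deduce that $f$ coincides, up to a scalar in $k^*$, with an appropriate triality component of $g$ with respect to $S$. Substituting such an expression for $f$ back into the formula $x\cdot_C y = f(x)g(y)$ defining $C$ and invoking the triality identity $g(xy)=g_1^S(x)g_2^S(y)$ in $S$ should, after using associativity of $b_n$ on $S$, recover the associativity identity $b_n(x\cdot_C y,z)=b_n(x,y\cdot_C z)$, which is precisely symmetry of $C$.

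The main obstacle is this final translation of the automorphism identity into algebraic information about $f$, $g$ and the multiplication of $S$. The natural tool is Proposition~\ref{P: CKT}(iv), which describes conjugates of $\rho_1^S$ by inner automorphisms as trialitarian automorphisms of other symmetric generalized composition algebras related to $S$ by an isomorphism of the form $\lambda k$. However, care is required when $[g]$ fails to lie in $\pgo^+(n)$: then $\rho_1^S$ does not act directly on $[g^{-1}]$, and one has to reason via the $S_3$-quotient $\Aut(\PGO^+(n))(k)/\Inn(\PGO^+(n))(k)$ provided by Proposition~\ref{P: KMRT}, while keeping track of the scalar ambiguities stemming from the identification of $\PGO^+(n)(k)$ with $\pgo^+(n)$ (Remark~\ref{R: identification}).
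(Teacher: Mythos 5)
Your opening reduction discards essential information, and the lemma cannot be recovered from what remains. Write $C=S_{f,g}$ with $S$ symmetric and $f,g\in\ort(n)$. By Lemma 5.2 of \cite{CKT} (the criterion on which the paper's proof rests), $C$ is symmetric if and only if $f,g\in\ort^+(n)$ \emph{and} both $\prod_{m=0}^2\left(\rho_1^S\right)^m([f])=1$ and $\left(\rho_1^S\right)^2\left([f]^{-1}\right)=[g]$ hold. Your intermediate identity $\rho_1^C\rho_2^C=\Id$ unwinds (once properness of $f,g$ is secured via the $S_3$-quotient) to exactly the second of these relations and nothing more: if $f\in\ort^+(n)$ is arbitrary and $g\in\ort^+(n)$ is chosen with $[g]=\left(\rho_1^S\right)^2\left([f]^{-1}\right)$, then
\[\rho_1^C\rho_2^C=\kappa_{[f]}^{-1}\,\kappa_{\rho_1^S([g])}^{-1}\,\rho_1^S\rho_2^S=\kappa_{[f]}^{-1}\kappa_{[f]}=\Id,\]
using $\rho_1^S([g])=\left(\rho_1^S\right)^3\left([f]^{-1}\right)=[f]^{-1}$ and $\rho_1^S\rho_2^S=\Id$. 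Yet such a $C$ is symmetric only when the product condition $\prod_{m=0}^2\left(\rho_1^S\right)^m([f])=1$ also holds, which it does not in general. So the implication ``$\rho_1^C\rho_2^C=\Id$ and $C$ is an orthogonal isotope of a symmetric algebra $\Rightarrow$ $C$ symmetric'' is false; any $C$ violating the product condition also violates the hypothesis of the lemma (its $\rho_1^C$ fails to have order three), which is precisely the information your first step throws away.

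The paper's proof uses the full hypothesis at exactly this point: since $\rho_1^D$ has order three and $\rho_1^C=\kappa_{[h]}\rho_1^D\kappa_{[h]}^{-1}$, also $\left(\rho_1^C\right)^3=\Id$; expanding $\left(\kappa_{[f]}^{-1}\rho_1^S\right)^3$ and using triviality of the centralizer of $\pgo^+(n)$ in $\pgo(n)$ yields the product condition, while the relation between $[f]$ and $[g]$ is extracted separately from the equation for $\rho_2$. To repair your argument you must retain $\left(\rho_1^C\right)^3=\Id$ (or some equivalent) rather than only the product $\rho_1^C\rho_2^C$. Separately, your proposed endgame --- converting the automorphism identity into a statement about $f$, $g$ and the multiplication of $S$ and then verifying associativity of $b_n$ directly --- is only sketched and would in effect amount to reproving the \cite{CKT} criterion; as written it is not a proof.
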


\begin{proof} There exist by Lemma \ref{L: symmetric} a symmetric $S\in\Comp(n)$ and $f,g\in \ort(n)$ such that
$C=S_{f,g}$. By Lemma 5.2 of \cite{CKT}, $C$ is symmetric if (and only if) $f,g\in\ort^+(n)$ and satisfy
\begin{equation}\label{desired}
\begin{array}{lll} \prod_{m=0}^2 \left(\rho_1^S\right)^m([f])=1 & \text{and} & \left(\rho_1^S\right)^2\left([f]^{-1}\right)=[g].
  \end{array}
\end{equation}
To prove that $f\in\ort^+(n)$, recall from Lemma \ref{L: relation} that
$\kappa_{[h]}\rho_1^D\kappa_{[h]}^{-1}=\rho_1^C=\kappa_{[f]}^{-1}\rho_1^S$, and therefore 
\[\bm\kappa_{[h]}\bm\rho_1^D \bm\kappa_{[h]}^{-1}=\bm\kappa_{[f]}^{-1}\bm\rho_1^S\]
in the
group $\Aut\left(\PGO^+(n)\right)(k)$. In 
\[\Aut\left(\PGO^+(n)\right)(k)/\Inn\left(\PGO^+(n)\right)(k)\simeq S_3,\]
the coset of the left hand side has
order three,
while if $f\notin\ort^+(n)$,
the coset of the right hand side is a product of an element of order two with an element of order three, which by the structure of
$S_3$ has
order two. Thus $f\in\ort^+(n)$, and a similar argument gives $g\in\ort^+(n)$.

For the first equality in \eqref{desired}, it follows from $\rho_1^C=\kappa_{[f]}^{-1}\rho_1^S$ that
\[\left(\kappa_{[f]}^{-1}\rho_1^S\right)^3=\left(\kappa_{[h]}\rho_1^D\kappa_{[h]}^{-1}\right)^3.\]
Expanding and using the fact that $\rho_1^S$ and $\rho_1^D$ are homomorphisms, we get
\[\kappa_{[f']}\left(\rho_1^S\right)^3=\kappa_{[h]}\left(\rho_1^D\right)^3\kappa_{[h]}^{-1}\]
with $[f']$ being the inverse of $\prod_{m=0}^2 \left(\rho_1^C\right)^m([f])$. Since $D$ and $S$ are symmetric, $\rho_r^D$ and
$\rho_r^S$ have order three for each $r$. Thus $\kappa_{[f']}=\Id$, and since the centralizer of $\pgo^+(n)$ in $\pgo(n)$ is
trivial, this proves that $[f']$ is trivial, whence the first equality follows.

For the second we have $\rho_2^C=\kappa_{[g]}^{-1}\rho_2^S$, and
$\rho_2^T=\left(\rho_1^T\right)^2$ whenever $T\in\Comp(n)$ is symmetric. Thus
\[\kappa_{[g]}^{-1}\left(\rho_1^S\right)^2=\kappa_{[g]}^{-1}
\rho_2^S=\rho_2^C=\kappa_{[h]}\rho_2^D\kappa_{[h]}^{-1}=\kappa_{[h]}
\left(\rho_1^D\right)^2\kappa_{[h]}^{-1}=\left(\rho_1^C\right)^2.\]
The rightmost expression equals $\left(\kappa_{[f]}^{-1}\rho_1^S\right)^2=\kappa_{[f'']}^{-1}\left(\rho_1^S\right)^2$ with
$[f'']=\rho_1^S\left([f]\right)[f]$. Thus
\[\kappa_{[g]}^{-1}=\kappa_{[f'']}^{-1}\]
and triviality of the centralizer of $\pgo^+(n)$ gives $[g]=[f'']$, which by the first equality in \eqref{desired} equals
$\left(\rho_1^S\right)^2\left([f]^{-1}\right)$. This proves the second equality, and the proof is complete.
\end{proof}

\section{Pairs of Outer Automorphisms}
As in the previous section we fix an arbitrary $n\in\Pf_3(k)$. In this section we prove that $\Comp(n)$ is equivalent to a full
subcategory of a group action groupoid.

\subsection{Constructing the Functor}
By Lemma \ref{L: lift} we know that $\pgo(n)$ acts on 
\[\Aut\left(\PGO^+(n)\right)(k)\times\Aut\left(\PGO^+(n)\right)(k)\]
by
\[[h]\cdot(\bm\alpha_1,\bm\alpha_2)=\left(\bm\kappa_{[h]}\bm\alpha_1\bm\kappa_{[h]}^{-1},\bm\kappa_{[h]}\bm\alpha_2\bm\kappa_{[h]}
^{ -1}\right).\]
In the fashion described in Section 2.5, this group action gives rise to the group action groupoid
\[\AUT(n)=\phantom{}_{\pgo(n)}\left(\Aut_k(\PGO^+(n))(k)\times\Aut_k(\PGO^+(n))(k)\right).\]
As a step toward showing that $\Comp(n)$ is equivalent to a full subcategory of $\AUT(n)$, we begin by proving that isomorphisms
in $\Comp(n)$ correspond to isomorphisms in $\AUT(n)$.

\begin{Prp} Let $C,D\in\Comp(n)$. Then there is a bijection
\[\Comp(n)(C,D)\to\AUT(n)\left((\bm\rho_1^C,\bm\rho_2^C),(\bm\rho_1^D,\bm\rho_2^D)\right)\]
given by $h\mapsto [h]$.
\end{Prp}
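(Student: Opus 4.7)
The proof splits into three parts: well-definedness of $h\mapsto[h]$, injectivity, and surjectivity.

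For well-definedness, suppose $h\colon C\to D$ is an algebra isomorphism. Since composition algebra morphisms are isometries (as noted in the preliminary discussion of $\Comp_d$), $h\in\ort(n)\subseteq\go(n)$ and $[h]\in\pgo(n)$ is defined. By Lemma \ref{L: independent}, verifying $\bm\kappa_{[h]}\bm\rho_r^C\bm\kappa_{[h]}^{-1}=\bm\rho_r^D$ reduces to checking the identity on $\pgo^+(n)$. For $\tilde h\in\go^+(n)$ with $C$-triality components $(\tilde h_1^C,\tilde h_2^C)$, applying $h$ to $\tilde h(x\cdot_C y)=\tilde h_1^C(x)\cdot_C\tilde h_2^C(y)$ and using the multiplicativity of $h$ yields
\[(h\tilde hh^{-1})(x'\cdot_D y')=(h\tilde h_1^Ch^{-1})(x')\cdot_D(h\tilde h_2^Ch^{-1})(y')\]
for all $x',y'\in D$, identifying $(h\tilde h_r^Ch^{-1})$ as $D$-triality components of $h\tilde hh^{-1}$; Lemma \ref{L: Triality} then delivers the required equality.

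For injectivity, if $h_1=\lambda h_2$ with $h_1,h_2\colon C\to D$ both isomorphisms and $\lambda\in k^*$, then comparing $h_1(xy)=h_1(x)h_1(y)=\lambda^2 h_2(x)h_2(y)$ with $h_1(xy)=\lambda h_2(x)h_2(y)$ gives $\lambda=\lambda^2$ on any nonzero product in $D$. Such products exist by the multiplicativity and non-degeneracy of $n$, forcing $\lambda=1$.

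Surjectivity is the main obstacle. Given $[h]\in\pgo(n)$ satisfying the required conjugation relations, I reduce to the symmetric case covered by Proposition \ref{P: CKT}. Fix a symmetric $S\in\Comp(n)$ (the para-Hurwitz algebra attached to the essentially unique Hurwitz algebra in $\Comp(n)$) and, using Lemma \ref{L: Kaplansky} and the para-Hurwitz construction, realize both $C=S_{f,g}$ and $D=S_{f',g'}$ with $f,g,f',g'\in\ort(n)$. Substituting $\bm\rho_r^C=\bm\kappa_{[f_r]}^{-1}\bm\rho_r^S$ and $\bm\rho_r^D=\bm\kappa_{[f'_r]}^{-1}\bm\rho_r^S$ from Lemma \ref{L: relation} into the hypothesis and rearranging, the two conjugation relations translate to
\[\bm\rho_r^S\,\bm\kappa_{[h]}\,(\bm\rho_r^S)^{-1}=\bm\kappa_{[f'_r h f_r^{-1}]}\qquad(r\in\{1,2\}),\]
where $f_1=f$, $f_2=g$, $f'_1=f'$, $f'_2=g'$. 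Combined with Proposition \ref{P: CKT}(ii) (so that $\bm\rho_2^S=(\bm\rho_1^S)^2$), these identities force a compatibility among $[h],[f],[g],[f'],[g']$ which, via the conjugation criterion of Proposition \ref{P: CKT}(iv), yields $\lambda\in k^*$ such that $\lambda h$ is an algebra automorphism of $S$ simultaneously intertwining $(f,g)$ with $(f',g')$. Lemma \ref{L: juggling} then upgrades $\lambda h$ to an algebra isomorphism $S_{f,g}=C\to S_{f',g'}=D$ whose projective class is $[h]$.

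The principal technical difficulty is this penultimate step: extracting from the two conjugation identities on $\bm\rho_r^S$ a genuine algebra automorphism of $S$ compatible with both pairs of isotope parameters, rather than a mere intertwiner of triality data. The key tools are Proposition \ref{P: CKT}(iv), which converts the $r=1$ identity into an algebra-level map up to scalar; the symmetric triality relation $\bm\rho_2^S=(\bm\rho_1^S)^2$, which makes the $r=2$ identity a consistency condition rather than an independent constraint; and the triviality of the centralizer of $\pgo^+(n)$ in $\pgo(n)$ already exploited in the proof of Lemma \ref{L: simultaneous}, which pins down the residual scalar ambiguity.
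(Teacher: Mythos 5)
Your treatment of well-definedness and injectivity is correct and matches the paper's argument (the paper conjugates by $h^{-1}$ rather than $h$, but this is immaterial). The problem is surjectivity, where you correctly identify the crux but do not actually cross it. Proposition \ref{P: CKT}(iv) is a statement comparing $\rho_1^S$ with $\rho_1^T$ for \emph{two symmetric} algebras: it converts $\kappa_{[h]}\rho_1^S\kappa_{[h]}^{-1}=\rho_1^T$ into an isomorphism $\lambda h:S\to T$. Your derived identities $\bm\rho_r^S\bm\kappa_{[h]}(\bm\rho_r^S)^{-1}=\bm\kappa_{[f'_r h f_r^{-1}]}$ are not of this form, and saying they ``force a compatibility which, via Proposition \ref{P: CKT}(iv), yields $\lambda$'' is precisely the step that needs a proof. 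The paper bridges this by an explicit construction: it sets $f''=f'hf^{-1}h^{-1}$, $g''=g'hg^{-1}h^{-1}$ and $B=T_{f'',g''}$, checks via Lemma \ref{L: relation} that $\kappa_{[h]}\rho_r^S\kappa_{[h]}^{-1}=\rho_r^B$ for both $r$, and then — this is the essential input you never invoke — applies Lemma \ref{L: simultaneous} to conclude that $B$ is \emph{symmetric}, which is what legitimizes the appeal to Proposition \ref{P: CKT}(iv). Without producing such a $B$ and proving its symmetry, the hypothesis of \ref{P: CKT}(iv) is simply not met; citing the triviality of the centralizer and the relation $\bm\rho_2^S=(\bm\rho_1^S)^2$ does not substitute for it.

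Two further inaccuracies. First, you realize both $C=S_{f,g}$ and $D=S_{f',g'}$ over the \emph{same} symmetric $S$; Lemma \ref{L: Kaplansky} (and hence Lemma \ref{L: symmetric}) produces an algebra depending on the input, and a fixed (para-)Hurwitz algebra only yields a subcategory that is \emph{dense} in $\Comp(n)$, not one containing every object on the nose — so this reduction requires an extra (routine but unstated) replacement of $D$ by an isomorphic isotope and a corresponding adjustment of $h$. The paper avoids this by allowing two different symmetric algebras $S$ and $T$. Second, the map $\lambda h$ you obtain is not ``an algebra automorphism of $S$'': even in the paper's argument it is an isomorphism from $S$ onto the different symmetric isotope $B$, and the passage to an isomorphism $C\to D$ then goes through Lemma \ref{L: juggling} via the identity $D=B_{hfh^{-1},hgh^{-1}}$, not through an intertwining automorphism of $S$.
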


Our proof of the well-definedness of the map will closely generalize that of Proposition \ref{P: CKT}(iv), which is given in
\cite{CKT}. Our proof of
surjectivity instead transfers the problem to the corresponding problem for symmetric algebras, to which that proposition applies.

\begin{proof} In view of Lemma \ref{L: independent}, proving that the map is well-defined amounts to showing that for each
isomorphism $h:C\to D$
we have
\begin{equation}\label{pair}
\left(\kappa_{[h]}\rho_1^C\kappa_{[h]}^{-1},\kappa_{[h]}\rho_1^C\kappa_{[h]}^{-1}\right)=\left(\rho_1^D,
\rho_2^D\right).
\end{equation}
Assume therefore that $h: C\to D$ is an isomorphism. Then $h\in\ort(n)$, whence $[h]\in\pgo(n)$. Let $j\in \go^+(n)$ and let
$(j_1,j_2)$ be a pair of triality
components of $j$ with respect to $D$. Denoting multiplication in $C$ by juxtaposition and in $D$ by $\cdot$ we have, for all
$x,y\in C$,
\[h^{-1}j h(xy)=h^{-1}j(h(x)\cdot h(y))=h^{-1}
(j_1h(x)\cdot j_2h(y))=h^{-1}j_1h(x)h^{-1}j_2h(y).\]
Thus
\[\left(h^{-1}j_1h,h^{-1}j_2h\right)\]
is a pair of triality components for $h^{-1}j h$ with respect to $C$, and the uniqueness statement of Lemma \ref{L: Triality}
implies
that
\[\rho_r^C([h^{-1}j h])=[h^{-1}]\rho_r^D[j][h]\]
which, since $j$ was chosen arbitrarily, implies \eqref{pair}. Thus the map is well-defined. It is injective since if
$h,j: C\to D$ are isomorphisms with $[h]=[j]$, then for some
$\lambda\in k^*$ we have $j=\lambda h$. Thus for all $x,y\in C$
\[\lambda h(xy)=j(xy)=j(x)\cdot j(y)=\lambda h(x)\cdot\lambda h(y)=\lambda^2h(xy),\]
whence $\lambda=1$ and thus $j=h$.

It remains to show surjectivity. Let $h\in \go(n)$ be such that
\[\bm\rho_r^D=\bm\kappa_{[h]}\bm\rho_r^C \bm\kappa_{[h]}^{-1},\]
whence
\[\rho_r^D=\kappa_{[h]}\rho_r^C\kappa_{[h]}^{-1},\]
for each $r\in\{1,2\}$. By Lemma \ref{L: symmetric} there exist symmetric $S,T\in\Comp(n)$ and $f,g,f',g'\in\ort(n)$ such that
$C=S_{f,g}$ and $D=T_{f',g'}$. Then by Lemma \ref{L: relation},
\[\kappa_{[h]}\kappa_{[f]}^{-1}\rho_1^S\kappa_{[h]}^{-1}=\kappa_{[h]}\rho_1^C\kappa_{[h]}^{-1}=\rho_1^D=\kappa_{[f']}^{-1}\rho_1^T,\]
which is equivalent to
\[\kappa_{[h]}\rho_1^S\kappa_{[h]}^{-1}=\kappa_{[h fh^{-1}{f'}^{-1}]}\rho_1^T.\]
Similarly one obtains
\[\kappa_{[h]}\rho_2^S\kappa_{[h]}^{-1}=\kappa_{[h gh^{-1}{g'}^{-1}]}\rho_2^T.\]
Now $f''=f'h f^{-1}h^{-1}$ and $g''=g'h g^{-1}h^{-1}$ belong to $\ort(n)$, whence $B=T_{f'',g''}$ is in $\Comp(n)$, and
from the above two lines and Lemma \ref{L: relation} we deduce
\[\kappa_{[h]}\rho_r^S\kappa_{[h]}^{-1}=\rho_r^B\]
for each $i$. Since $S$ is symmetric, Lemma \ref{L: simultaneous} implies that $B$ is symmetric, and then with Proposition \ref{P:
CKT}(iv) we conclude that for some $\lambda\in k^*$, $\lambda h$ is an isomorphism from $S$ to $B$. Since $C=S_{f,g}$ and
\[D=T_{f',g'}=B_{h f h^{-1},h g h^{-1}}=B_{\lambda h f (\lambda h)^{-1},\lambda h g (\lambda h)^{-1}},\]
we conclude from Lemma \ref{L: juggling} that $\lambda h$ is an isomorphism from $C$ to $D$. Since $[h]=[\lambda h]$, and
$h\in\go(n)$ was chosen arbitrarily, this proves surjectivity.
\end{proof}

Time is now ripe to prove our main result.

\begin{Thm}\label{T: main} The map
\[\mathfrak F: \Comp(n)\to\AUT(n)\]
defined on objects by
\[C\mapsto \left(\bm\rho_1^C,\bm\rho_2^C\right)\]
and on morphisms by $h\mapsto [h]$, is a full and faithful functor, which is injective on objects up to sign.
\end{Thm}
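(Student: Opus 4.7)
The plan is to treat the three claims separately, using the previous proposition as the main engine and Lemma \ref{L: scalars} to pin down the ambiguity on objects.

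First, I would verify that $\mathfrak F$ is a functor. The assignment on objects is given, so only the morphism rule requires a check: for isomorphisms $h:C\to D$ and $j:D\to E$ in $\Comp(n)$, the composite $jh$ satisfies $[jh]=[j][h]$ in $\pgo(n)$, which is exactly composition in $\AUT(n)$; likewise $[\Id_C]=1$ corresponds to the identity morphism at $(\bm\rho_1^C,\bm\rho_2^C)$. The fact that $h\mapsto[h]$ actually lands in the correct hom-set of $\AUT(n)$ is precisely the content of the preceding proposition.

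Second, fullness and faithfulness are immediate from the preceding proposition, which asserts that
\[h\mapsto[h]:\Comp(n)(C,D)\to\AUT(n)\!\left((\bm\rho_1^C,\bm\rho_2^C),(\bm\rho_1^D,\bm\rho_2^D)\right)\]
is a bijection for every pair $C,D\in\Comp(n)$.

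Third, for injectivity on objects up to sign, suppose $\mathfrak F(C)=\mathfrak F(D)$, i.e.\ $\bm\rho_r^C=\bm\rho_r^D$ for $r\in\{1,2\}$. Then the identity element of $\pgo(n)$ is a morphism in $\AUT(n)$ from $(\bm\rho_1^C,\bm\rho_2^C)$ to $(\bm\rho_1^D,\bm\rho_2^D)$, so by the preceding proposition it lifts to an isomorphism $h:C\to D$ of algebras with $[h]=1$ in $\pgo(n)$. Thus $h=\lambda\Id$ for some $\lambda\in k^*$. Writing $\cdot_C$ and $\cdot_D$ for the respective multiplications, the identity $h(x\cdot_C y)=h(x)\cdot_D h(y)$ becomes $\lambda(x\cdot_C y)=\lambda^2(x\cdot_D y)$, so $C=\lambda D$ as algebras. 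Since both $C,D$ belong to $\Comp(n)$, Lemma \ref{L: scalars}(ii) forces $\lambda=\pm1$, giving $C=\pm D$.

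The only potential obstacle is making sure that the preceding proposition can be invoked \emph{with the identity} as the morphism in $\AUT(n)$, which is the source of the isomorphism $h=\lambda\Id$; once this is in hand, Lemma \ref{L: scalars} finishes the argument. All other steps are formal consequences of the groupoid description in Section 2.5 and the bijection already established.
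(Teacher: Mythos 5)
Your proof is correct, and the first two parts (functoriality, full and faithfulness) follow the same line as the paper's own proof, reading them off from the preceding proposition. For injectivity on objects up to sign you take a genuinely different and more economical route. The paper writes $C=S_{f,g}$ and $D=T_{f',g'}$ over symmetric $S,T$, applies Lemma~\ref{L: relation} and Lemma~\ref{L: simultaneous} to show that an auxiliary isotope $B=S_{f{f'}^{-1},g{g'}^{-1}}$ is symmetric, invokes Theorem~5.8 of \cite{CKT} to conclude $B=\lambda T$, and then unwinds the isotopes to get $C=\lambda D$. You instead observe that equality $(\bm\rho_1^C,\bm\rho_2^C)=(\bm\rho_1^D,\bm\rho_2^D)$ makes the identity of $\pgo(n)$ a morphism between these objects in $\AUT(n)$, so by surjectivity of the bijection in the preceding proposition there is an algebra isomorphism $h:C\to D$ with $[h]=1$, forcing $h=\lambda\Id$ and hence $C=\lambda D$; Lemma~\ref{L: scalars}(ii) then gives $\lambda=\pm1$. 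Your version is more modular, since it re-uses the already-established bijection rather than redoing a special case of its surjectivity argument; the underlying ingredients are ultimately the same, because that surjectivity proof itself relies on Lemma~\ref{L: simultaneous} and Proposition~\ref{P: CKT}(iv), but the bookkeeping is cleaner and the dependency structure of the section is made more transparent.
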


By \emph{injective up to sign}, we mean that if $\mathfrak F(C)=\mathfrak F(D)$, then $D$ is a scalar multiple of $C$ with scalar
$\pm1$.

\begin{proof} The map $\mathfrak F$ is well-defined on objects by Proposition \ref{P: rho}. It is well-defined on morphisms by
the above lemma, and clearly maps identities to identities and respects composition of morphisms. Thus $\mathfrak F$ is a
functor, which by the lemma above is full and faithful. It remains to be shown that $\mathfrak F$ is injective up to sign.

Assume that $C,D\in\Comp(n)$ satisfy $\bm\rho_r^C=\bm\rho_r^D$, and thence $\rho_r^C=\rho_r^D$, for each $r$. Then
$C=S_{f,g}$
and $D=T_{f',g'}$ with $S,T$
symmetric and $f,g,f',g'\in\ort(n)$, and by Lemma \ref{L: relation},
\[\left(\kappa_{[f'f^{-1}]}\rho_1^S,\kappa_{[g'g^{-1}]}\rho_2^S\right)=\left(\rho_1^T,\rho_2^T\right).\]
But the left hand side is equal to $(\rho_1^B,\rho_2^B)$ with $B=S_{f{f'}^{-1},g{g'}^{-1}}$. Thus $B$ is symmetric by Lemma \ref{L:
simultaneous}, and then by Theorem 5.8 in \cite{CKT}, we have
$B=\lambda T$ for some $\lambda\in k^*$. Then $\lambda=\pm 1$ by Lemma \ref{L: scalars}. Therefore,
\[C=S_{f,g}=\left(S_{f{f'}^{-1},g{g'}^{-1}}\right)_{f',g'}=B_{f',g'}=\lambda T_{f',g'}=\lambda D,\]
proving injectivity up to sign. This completes the proof.
\end{proof}

\subsection{Trialitarian Pairs}
The aim of this section is to describe the image of the functor $\mathfrak F$. We begin by the following observation. If
$(\bm\alpha_1,\bm\alpha_2)$ is an object in $\AUT(n)$, then there exists a symmetric $S\in\Comp(n)$ such that
$\bm\alpha_1=\bm\kappa_{[f]}\bm\rho_1^S$
for some $f\in\go(n)$, and then $\bm\alpha_2=\bm\kappa_{[g]}\left(\bm\rho_2^S\right)^r$ for some $g\in\go(n)$ and $r\in\{0,\pm1\}$.
If
$r=1$
and there exist $f',g'\in \ort(n)$ with $[f']=[f]$ and $[g']=[g]$, then by Lemma \ref{L: relation},
\[(\bm\alpha_1,\bm\alpha_2)=\mathfrak F\left(S_{{f'}^{-1},{g'}^{-1}}\right).\]
We will, roughly speaking, show that the requirement on $r$ is essential, but that, up to isomorphism, that on $f$ and $g$ is
not. To formalize and prove the latter statement, we need the following result.

\begin{Lma}\label{L: normalization} Let $F,G\in \go(n)$ and let $S\in\Comp(n)$ be a para-Hurwitz algebra. Then there exist
$f,g\in \ort(n)$ and a symmetric $T\in \Comp(n)$ such that
\begin{equation}\label{generalized}
\left(\kappa_{[F]}\rho_1^S,\kappa_{[G]}\rho_2^S\right)\simeq\left(\kappa_{[f]}\rho_1^T,\kappa_{[g]}\rho_2^T\right)
\end{equation}
in $\AUT(n)$.
\end{Lma}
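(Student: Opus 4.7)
The plan is to realize the pair $(\kappa_{[F]}\rho_1^S,\kappa_{[G]}\rho_2^S)$ as the pair of triality components of a generalized composition algebra $\tilde C$, transport $\tilde C$ by a suitable similarity $h\in\go(n)$ so as to bring its multiplier to $1$, and then apply Lemma \ref{L: symmetric} to the resulting honest composition algebra.

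Concretely, I would set $\tilde C:=S_{F^{-1},G^{-1}}$. Since $F^{-1},G^{-1}\in\go(n)$, this is a (not necessarily unital, not necessarily symmetric) generalized composition algebra with quadratic form $n$ and multiplier $\mu_{\tilde C}=\mu(F)^{-1}\mu(G)^{-1}$. The proofs of Lemma \ref{L: Triality} and Lemma \ref{L: relation} use only the symmetric triality on the base algebra $S$ (via Proposition \ref{P: CKT}(i)) and do not actually use that $f,g\in\ort(n)$; the same arguments therefore yield well-defined triality maps $\rho_r^{\tilde C}$ on $\pgo^+(n)$ with $\rho_1^{\tilde C}=\kappa_{[F]}\rho_1^S$ and $\rho_2^{\tilde C}=\kappa_{[G]}\rho_2^S$.

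Next, I would locate $h\in\go(n)$ with $\mu(h)=\mu_{\tilde C}$. This is the step that uses the hypothesis $n\in\Pf_3(k)$: in the Hurwitz algebra $H$ underlying the para-Hurwitz algebra $S$, left multiplication $L_v^H$ by any $v$ with $n(v)\neq 0$ is a similarity with multiplier $n(v)$, so the image of $\mu:\go(n)\to k^*$ contains the group of nonzero values of $n$; and $\mu_{\tilde C}$, being a product of multipliers of similarities, lies in this group. With $h$ chosen, define $\tilde D$ on $V$ by the transported multiplication $x\cdot_{\tilde D}y:=h\bigl(h^{-1}(x)\cdot_{\tilde C}h^{-1}(y)\bigr)$. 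A direct calculation yields $\mu_{\tilde D}=\mu_{\tilde C}/\mu(h)=1$, so $\tilde D\in\Comp(n)$; and the same argument as in the proof of Proposition \ref{P: CKT}(iv), adapted to generalized composition algebras, shows that $\rho_r^{\tilde D}=\kappa_{[h]}\rho_r^{\tilde C}\kappa_{[h]}^{-1}$ for $r\in\{1,2\}$.

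It remains to apply Lemma \ref{L: symmetric} to $\tilde D$: writing $\tilde D=T_{f_0,g_0}$ with $T\in\Comp(n)$ symmetric and $f_0,g_0\in\ort(n)$, Lemma \ref{L: relation} gives $\rho_1^{\tilde D}=\kappa_{[f_0^{-1}]}\rho_1^T$ and $\rho_2^{\tilde D}=\kappa_{[g_0^{-1}]}\rho_2^T$. Setting $f=f_0^{-1}$ and $g=g_0^{-1}$, which still lie in $\ort(n)$, assembles to
\[\bigl(\kappa_{[F]}\rho_1^S,\kappa_{[G]}\rho_2^S\bigr)=\bigl(\rho_1^{\tilde C},\rho_2^{\tilde C}\bigr)\simeq\bigl(\rho_1^{\tilde D},\rho_2^{\tilde D}\bigr)=\bigl(\kappa_{[f]}\rho_1^T,\kappa_{[g]}\rho_2^T\bigr)\]
in $\AUT(n)$, the middle isomorphism being implemented by $[h]\in\pgo(n)$. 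The main obstacle is largely bureaucratic: to verify that the formal analogues of Lemmas \ref{L: Triality} and \ref{L: relation}, as well as of Proposition \ref{P: CKT}(iv), carry over from composition algebras (and isometries) to generalized composition algebras (and arbitrary similarities in the isotope construction). An inspection of the original arguments shows that they never actually use $\mu_{\tilde C}=1$ or $f,g\in\ort(n)$ beyond notation.
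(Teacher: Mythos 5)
Your proof is correct, but it takes a genuinely different route from the paper's. You form the (generally non-symmetric, non-unital) generalized composition algebra $\tilde C=S_{F^{-1},G^{-1}}$, observe that the formal computation behind Lemmas \ref{L: Triality} and \ref{L: relation} never uses that the isotoping maps are isometries or that the multiplier is $1$, normalize the multiplier by transporting along a similarity $h$ with $\mu(h)=\mu_{\tilde C}$ (here you can simply take $h=F^{-1}G^{-1}$, since $\mu$ is a homomorphism --- your detour through values of the Pfister form is correct but unnecessary), and finally restore symmetry by Lemma \ref{L: symmetric}. The paper instead arranges symmetry \emph{first}: writing $S=H_{i,i}$, it builds a unital generalized composition algebra $H'=H_{R_a^H,L_b^H}$ with $a=F^{-1}(e)$, $b=G^{-1}(e)$, takes its ``para'' version $S'=H'_{i',i'}$, which is a \emph{symmetric} generalized composition algebra satisfying $S'=S_{iR_a^Hi',iL_b^Hi'}$, and then invokes the result of Chernousov--Knus--Tignol that every symmetric generalized composition algebra is isomorphic to a normalized one; the isometries $f,g$ emerge from the explicit operators $FiR_a^Hi'$, $GiL_b^Hi'$. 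What your approach buys is independence from the classification of symmetric generalized compositions and from the construction of the involution $i'$: the only external inputs are multiplier bookkeeping and Lemma \ref{L: symmetric}. What it costs is exactly the ``bureaucratic'' verification you flag: that triality components, their uniqueness in $\pgo^+(n)^2$, the isotope formula, and the conjugation formula for algebra isomorphisms that are similarities all extend verbatim to generalized composition algebras --- which they do, since equation \eqref{triality} and the well-definedness computation in Section 4.1 are purely formal and only require $\go^+(n)$ to be normal in $\go(n)$. The paper's route stays inside the symmetric world where the cited machinery is already available, at the price of a more elaborate explicit construction.
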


In the proof we need to pass to symmetric generalized composition algebras. To treat these
we shall refer to a couple of results proved in \cite{CKT}, where, as previously remarked, these algebras are called
\emph{symmetric compositions}, while symmetric composition algebras are named \emph{normalized symmetric compositions}.

\begin{proof} By definition of a para-Hurwitz algebra we have $S=H_{i,i}$ where $H$ is a Hurwitz algebra with canonical involution
$i$. Denoting the unity of $H$ by $e$, and setting $a=F^{-1}(e)$ and $b=G^{-1}(e)$ we have
\[\mu\left(F^{-1}\right)=n\left(F^{-1}(e)\right)=n(a),\]
and since $\mu$ is a group homomorphism,
\[\mu\left(FiR_a^H\right)=n(a)^{-1}n(i)n(a)=1,\]
whereby $FiR_a^H\in\ort(n)$, and similarly one gets $GiL_b^H\in \ort(n)$. The algebra 
\[H'=H_{R_a^H,L_b^H}\]
is in fact a unital
generalized composition algebra with norm $n$, unity $e'=(ab)^{-1}$ and multiplier $\lambda=n(ab)$. One can check that the map
\[\begin{array}{ll}
  i':H'\to H',& x\mapsto b_n(x,e')e'-x
  \end{array}
\]
is an anti-automorphism on $H'$ as well as an isometry, and further that $S'=H'_{i',i'}$ is a symmetric generalized composition
algebra. (The proof of the latter statement is straightforward and consists of manipulations analogous to those used to prove
that a para-Hurwitz algebra is a symmetric composition algebra.)

Altogether,
\[S'=S_{iR_a^Hi',iL_b^Hi'}\]
and since $S$ and $S'$ are symmetric and $iR_a^Hi',iL_b^Hi'\in\go(n)$, Lemma 5.2 from \cite{CKT} applies to give
\[\begin{array}{lll}\rho_1^{S'}=\kappa_{\left[iR_a^Hi'\right]}^{-1}\rho_1^S& \text{and}&
\rho_2^{S'}=\kappa_{\left[iL_b^Hi'\right]}^{-1}\rho_2^S.\end{array}\]
On the other hand, Proposition 3.6 from \cite{CKT} states that each symmetric generalized composition algebra is isomorphic to a
(unique) symmetric composition algebra. Thus there exists
a symmetric composition algebra $T$ and an isomorphism $h: T\to S'$, with $h\in\go(n)$. Therefore, by Proposition \ref{P: CKT},
\[\begin{array}{lll}\rho_1^{S'}=\kappa_{[h]}\rho_1^{T}\kappa_{[h]}^{-1}& \text{and}&
\rho_2^{S'}=\kappa_{[h]}\rho_2^{T}\kappa_{[h]}^{-1}.\end{array}\]
Equating the two above expressions of $\rho_1^{S'}$ we get
\[\kappa_{[F]}\rho_1^S=\kappa_{\left[FiR_a^Hi'\right]}\kappa_{[h]}\rho_1^T\kappa_{[h]}^{-1}=\kappa_{[h]}\kappa_{\left[h^{-1}
FiR_a^Hi'h\right]}\rho_1^T\kappa_{[h]}^{-1},\]
and $h^{-1}FiR_a^Hi'h\in\ort(n)$ since, as we have already concluded, $FiR_a^H\in \ort(n)$ and $i'\in\ort(n)$. Likewise,
\[\kappa_{[G]}\rho_2^S=\kappa_{[h]}\kappa_{\left[h^{-1}GiL_b^Hi'h\right]}\rho_2^T\kappa_{[h]}^{-1},\]
with $h^{-1}GiL_b^Hi'h\in\ort(n)$. This proves the existence $f,g\in\ort(n)$ such that \eqref{generalized} holds, and the
proof is complete.
\end{proof}

Next we will construct a full subcategory of $\AUT(n)$ in which the image of $\mathfrak F$ is dense, and to which, by Theorem \ref{T: main}, $\Comp(n)$ is
therefore equivalent.

To construct this subcategory, let $\Inn^*\left(\PGO^+(n)\right)(k)$ denote the subgroup of $\Aut\left(\PGO^+(n)\right)(k)$
consisting of all weakly
inner
automorphisms (with respect to $\PGO(n)$). From Proposition \ref{P: KMRT} we deduce that the set
\[\Delta(n)=\Aut\left(\PGO^+(n)\right)(k)/\Inn^*\left(\PGO^+(n)\right)(k)\]
of all left cosets of this subgroup consists of three elements. Denoting the quotient projection by $\pi$ we observe that any
outer
automorphism $\bm\alpha\in\Aut\left(\PGO^+(n)\right)(k)$ of order three satisfies
\[\Delta(n)=\pi(\{\ID,\bm\alpha,\bm\alpha^2\}).\]
Generalizing this, we call $(\bm\alpha_1,\bm\alpha_2)\in\AUT(n)$ a \emph{trialitarian pair} if
\[\pi(\{\ID,\bm\alpha_1,\bm\alpha_2\})=\Delta(n).\]
As an automorphism $\bm\alpha$ of $\PGO^+(n)$ is weakly inner if and only if $\pi(\bm\alpha)=\pi(\ID)$, this is equivalent
to
requiring
$\bm\alpha_1$ and $\bm\alpha_2$ to be strongly outer and have different images under $\pi$. We denote the set of trialitarian
pairs
by
$\TRI(n)$.

\begin{Rk} Set $\Omega(n)=\Aut\left(\PGO^+(n)\right)(k)\setminus\Inn^*\left(\PGO^+(n)\right)(k)$, the set of all strongly outer
automorphisms of
$\PGO^+(n)$.
Consider the diagram
\[\xymatrix@1{& \Omega(n)\ar[d]^{\pi} \phantom{\: ,}\\ \Omega(n) \ar[r]_{\pi}& \Delta(n) \: ,}\]
the pullback of which (in the category of sets) is $\Omega(n)\times_{\Delta(n)}\Omega(n)$ with the corresponding projection maps.
Then we in fact have
\[\TRI(n)=\Omega(n)\times\Omega(n)\setminus\Omega(n)\times_{\Delta(n)}\Omega(n).\]
\end{Rk}

Denoting the full subcategory of $\AUT(n)$ with object set $\TRI(n)$ by $\TRI(n)$ as well, we can describe the image of $\mathfrak
F$ up to isomorphism as follows.

\begin{Prp} The image of $\Comp(n)$ under $\mathfrak F$ is dense in $\TRI(n)$.
\end{Prp}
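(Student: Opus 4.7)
To prove density I must show that every object $(\bm\alpha_1,\bm\alpha_2)\in\TRI(n)$ is isomorphic in $\AUT(n)$ to $\mathfrak F(C)$ for some $C\in\Comp(n)$. Fix once and for all a para-Hurwitz algebra $S\in\Comp(n)$, which exists because $n\in\Pf_3(k)$. Proposition \ref{P: KMRT} lets me write
\[\bm\alpha_i=\bm\kappa_{[F_i]}(\bm\rho_1^S)^{r_i},\qquad i=1,2,\]
for some $F_i\in\go(n)$ and $r_i\in\{0,\pm1\}$. The hypothesis that $(\bm\alpha_1,\bm\alpha_2)\in\TRI(n)$ means on the one hand that each $\bm\alpha_i$ is strongly outer, forcing $r_i\neq 0$, and on the other that $\pi(\bm\alpha_1)\neq\pi(\bm\alpha_2)$ in the three-element set $\Delta(n)$, forcing $r_1\not\equiv r_2\pmod 3$. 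Hence $\{r_1,r_2\}=\{1,2\}$.

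Suppose first that $(r_1,r_2)=(1,2)$, so $\bm\alpha_1=\bm\kappa_{[F_1]}\bm\rho_1^S$ and $\bm\alpha_2=\bm\kappa_{[F_2]}\bm\rho_2^S$. Lemma \ref{L: normalization} then yields a symmetric $T\in\Comp(n)$ and $f,g\in\ort(n)$ with
\[(\bm\alpha_1,\bm\alpha_2)\simeq\left(\bm\kappa_{[f]}\bm\rho_1^T,\;\bm\kappa_{[g]}\bm\rho_2^T\right)\]
in $\AUT(n)$. Put $C=T_{f^{-1},g^{-1}}$; this lies in $\Comp(n)$ since $f^{-1},g^{-1}\in\ort(n)$. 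Applying Lemma \ref{L: relation} to $C$ regarded as an isotope of $T$ with parameters $f^{-1}$ and $g^{-1}$ gives $\bm\rho_1^C=\bm\kappa_{[f]}\bm\rho_1^T$ and $\bm\rho_2^C=\bm\kappa_{[g]}\bm\rho_2^T$, so $\mathfrak F(C)$ equals the right-hand side above and is therefore isomorphic to $(\bm\alpha_1,\bm\alpha_2)$ in $\AUT(n)$.

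The remaining case $(r_1,r_2)=(2,1)$ is reduced to the previous one by a symmetry argument. Writing $S=H_{i,i}$ with $H$ Hurwitz, one checks that the opposite algebra $S^{\mathrm{op}}$ coincides with $(H^{\mathrm{op}})_{i,i}$, so it is again a para-Hurwitz algebra in $\Comp(n)$; and comparing triality components of an arbitrary $h\in\go^+(n)$ for $S$ and $S^{\mathrm{op}}$ shows that $\bm\rho_1^{S^{\mathrm{op}}}=\bm\rho_2^S$ and $\bm\rho_2^{S^{\mathrm{op}}}=\bm\rho_1^S$. Relative to $S^{\mathrm{op}}$ the exponents of $\bm\alpha_1,\bm\alpha_2$ swap to $(1,2)$, and the first case applies.

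The principal obstacle is the case split on $(r_1,r_2)$, since Lemma \ref{L: normalization} is stated with the fixed ordering $(\bm\rho_1^S,\bm\rho_2^S)$; the reverse ordering requires the opposite-algebra trick above, which in turn rests on the fact that the para-Hurwitz class is stable under taking opposites and that $\bm\rho_1$ and $\bm\rho_2$ interchange accordingly. Once this is settled, everything else is a direct assembly of Propositions \ref{P: KMRT} and \ref{P: CKT} together with Lemmas \ref{L: normalization} and \ref{L: relation}.
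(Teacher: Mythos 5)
Your proof of the density (essential surjectivity) direction is correct, and in fact it is \emph{more} careful than the paper's own argument. The paper asserts that for \emph{any} para-Hurwitz $S$ one may write $(\bm\alpha_1,\bm\alpha_2)=\left(\bm\kappa_{[F]}^{-1}\bm\rho_1^S,\bm\kappa_{[G]}^{-1}\bm\rho_2^S\right)$, but as you observe, for a fixed $S$ this only holds when $\pi(\bm\alpha_1)=\pi(\bm\rho_1^S)$. Since $\pi$ is not invariant under conjugation by weakly inner automorphisms ($\Inn^*$ is not normal in $\Aut$, being the preimage of a non-normal order-two subgroup of $S_3$), both orderings of the two non-identity cosets genuinely occur, and the paper's phrasing skips the needed case split. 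Your reduction of the case $(r_1,r_2)=(2,1)$ via the opposite algebra is exactly right: $(H^{\mathrm{op}})_{i,i}$ is again para-Hurwitz (same unity, same quadratic form, and $i$ is also the canonical involution of $H^{\mathrm{op}}$), and swapping $x\leftrightarrow y$ in $h(xy)=h_1(x)h_2(y)$ shows $\bm\rho_1^{S^{\mathrm{op}}}=\bm\rho_2^S$ and $\bm\rho_2^{S^{\mathrm{op}}}=\bm\rho_1^S$, so the first case applies with $S^{\mathrm{op}}$ in place of $S$. Once in the ``$(1,2)$'' position the rest matches the paper: Lemma \ref{L: normalization}, then $C=T_{f^{-1},g^{-1}}$ and Lemma \ref{L: relation}.

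The one gap is at the start: density of the image inside $\TRI(n)$ presupposes that $\mathfrak F(\Comp(n))\subseteq\TRI(n)$, and you do not verify this containment. The paper proves it in the same proof: writing $C=S_{f,g}$ with $S$ symmetric and $f,g\in\ort(n)$, Lemma \ref{L: relation} gives $\left(\bm\rho_1^C,\bm\rho_2^C\right)=\left(\bm\kappa_{[f]}^{-1}\bm\rho_1^S,\bm\kappa_{[g]}^{-1}\bm\rho_2^S\right)$; since $\bm\rho_1^S$ is outer of order three with $\bm\rho_2^S=(\bm\rho_1^S)^2$, the cosets $\pi(\bm\rho_1^C)=\pi(\bm\rho_1^S)$ and $\pi(\bm\rho_2^C)=\pi(\bm\rho_2^S)$ are the two non-identity elements of $\Delta(n)$, so $\mathfrak F(C)\in\TRI(n)$. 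You should add this short verification; with it, your proof is complete and arguably an improvement on the published one.
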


\begin{proof} Let $C\in\Comp(n)$. Then $C=S_{f,g}$ for some symmetric $S\in\Comp(n)$ and $f,g\in\ort(n)$. Thus by Lemma \ref{L:
relation}, we have
\[\left(\bm\rho_1^C,\bm\rho_2^C\right)=\left(\bm\kappa_{[f]}^{-1}\bm\rho_1^S,\bm\kappa_{[g]}^{-1}\bm\rho_2^S\right),\]
which belongs to $\TRI(n)$ since $\bm\rho_1^S$ is an outer automorphism of order three and $\bm\rho_2^S=\left(\bm\rho_1^S\right)^2$. Thus
$\mathfrak{F}\left(\Comp(n)\right)\subseteq\TRI(n)$.

To prove denseness, assume that $(\bm\alpha_1,\bm\alpha_2)\in\TRI(n)$. Then for any para-Hurwitz algebra $S\in \Comp(n)$ there exist
$F,G\in\go(n)$
with
 \[(\bm\alpha_1,\bm\alpha_2)=\left(\bm\kappa_{[F]}^{-1}\bm\rho_1^S,\bm\kappa_{[G]}^{-1}\bm\rho_2^S\right),\]
and then Lemma \ref{L: normalization} provides a symmetric $T\in\Comp(n)$ and $f,g\in\ort(n)$ such that
 \[(\bm\alpha_1,\bm\alpha_2)=\left(\bm\kappa_{[f]}^{-1}\bm\rho_1^T,\bm\kappa_{[g]}^{-1}\bm\rho_2^T\right),\]
But then $C=T_{f^{-1},g^{-1}}\in \Comp(n)$, and, by Lemma \ref{L: relation}, $(\bm\alpha_1,\bm\alpha_2)\simeq\mathfrak F(C)$. This
completes
the proof.
\end{proof}

We have thus achieved our advertised goal, as we have proved the following.

\begin{Cor} The functor $\mathfrak F:\Comp(n)\to\AUT(n)$ induces an equivalence of categories $\Comp(n)\to\TRI(n)$.
\end{Cor}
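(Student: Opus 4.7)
The proof is essentially a formal consequence of what has already been established, so my plan is to assemble the three ingredients of an equivalence of categories rather than to prove anything substantially new.

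First I would verify that $\mathfrak F$ corestricts to a functor $\mathfrak F':\Comp(n)\to\TRI(n)$. The previous proposition gives the inclusion $\mathfrak F(\Comp(n))\subseteq\TRI(n)$ on objects, and since $\TRI(n)$ is defined as a \emph{full} subcategory of $\AUT(n)$, every morphism in the image of $\mathfrak F$ between objects of $\TRI(n)$ is already a morphism in $\TRI(n)$. Thus the corestriction $\mathfrak F'$ is a well-defined functor.

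Next I would observe that $\mathfrak F'$ is full and faithful. Faithfulness is immediate since $\mathfrak F$ is faithful by Theorem \ref{T: main}. Fullness is equally immediate: given $C,D\in\Comp(n)$, every morphism $\mathfrak F'(C)\to\mathfrak F'(D)$ in $\TRI(n)$ is a morphism $\mathfrak F(C)\to\mathfrak F(D)$ in $\AUT(n)$ (by fullness of the inclusion $\TRI(n)\hookrightarrow\AUT(n)$), which lifts to a morphism $C\to D$ in $\Comp(n)$ by fullness of $\mathfrak F$.

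Finally, essential surjectivity of $\mathfrak F'$ is precisely the content of the previous proposition, which establishes that every object of $\TRI(n)$ is isomorphic in $\AUT(n)$ — and therefore in $\TRI(n)$, since $\TRI(n)$ is full — to some $\mathfrak F(C)$ with $C\in\Comp(n)$. Hence $\mathfrak F'$ is a full, faithful, and essentially surjective functor, and is therefore an equivalence of categories.

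There is no real obstacle here; the work has all been done in Theorem \ref{T: main} and in the density proposition. The only point deserving a line of care is the observation that fullness of the subcategory inclusion $\TRI(n)\hookrightarrow\AUT(n)$ is what lets the properties of $\mathfrak F$ descend cleanly to $\mathfrak F'$, and that no coherent choice of quasi-inverse needs to be constructed explicitly since a full, faithful, essentially surjective functor is automatically an equivalence.
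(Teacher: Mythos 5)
Your proof is correct and follows the same route the paper intends: the corollary is stated without proof precisely because it is the routine assembly of Theorem \ref{T: main} (fullness and faithfulness of $\mathfrak F$) and the density proposition (essential surjectivity of the corestriction), exactly as you lay out. Your explicit remark that fullness of the inclusion $\TRI(n)\hookrightarrow\AUT(n)$ is what makes fullness and essential surjectivity transfer to the corestriction $\mathfrak F'$ is the right point to flag, and nothing is missing.
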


\section{Previous Results Revisited}
In this section we will revisit known results about composition algebras, and express them in terms of the triality pairs of these
algebras.

\subsection{Symmetric Composition Algebras}
In order to precisely express how our approach generalizes that of \cite{CKT} and \cite{CEKT}, we begin by expressing the
structural results obtained there in the current framework. To this end,
consider the set of all
trialitarian automorphisms of $\PGO^+(n)$, i.e.\ outer automorphisms of $\PGO^+(n)$ of order three. The group $\pgo(n)$ acts on
this set by conjugation, viz.
\[[h]\cdot\bm\alpha=\bm\kappa_{[h]}\bm\alpha \bm\kappa_{[h]}^{-1},\]
and we denote the group action groupoid arising from this action by $\TRI^*(n)$. The following lemma is easy to check.

\begin{Lma} The map $\mathfrak{G}: \TRI^*(n)\to\AUT(n)$, defined on objects by $\bm\alpha\mapsto(\bm\alpha,\bm\alpha^2)$, and on
morphisms by
$[h]\mapsto[h]$, is a full and faithful functor, which is injective on objects. Moreover, the image of $\mathfrak{G}$ is a
full subcategory of $\TRI(n)$.
\end{Lma}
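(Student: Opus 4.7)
My plan is to verify the lemma by breaking it into four routine components (well-definedness/functoriality, faithfulness, injectivity on objects, fullness) plus one structural claim (that the image lands in $\TRI(n)$), which is where the only real content lies.

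First I would check well-definedness and functoriality. On objects, $(\bm\alpha,\bm\alpha^2)$ is manifestly in $\AUT(n)$. On morphisms, a morphism in $\TRI^*(n)$ from $\bm\alpha$ to $\bm\beta$ is some $[h]\in\pgo(n)$ with $\bm\kappa_{[h]}\bm\alpha\bm\kappa_{[h]}^{-1}=\bm\beta$; squaring both sides gives $\bm\kappa_{[h]}\bm\alpha^2\bm\kappa_{[h]}^{-1}=\bm\beta^2$, so $[h]$ is also a morphism $(\bm\alpha,\bm\alpha^2)\to(\bm\beta,\bm\beta^2)$ in $\AUT(n)$. Identities and composition are preserved since the action of $\pgo(n)$ on either side is group-theoretic and the maps $[h]\mapsto[h]$ on morphisms are literally the identity on the group parameter.

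Next I would dispatch the easy algebraic assertions. Faithfulness is immediate since $[h]\mapsto[h]$ is injective. Injectivity on objects follows because equality $(\bm\alpha,\bm\alpha^2)=(\bm\beta,\bm\beta^2)$ already forces $\bm\alpha=\bm\beta$ from the first component. For fullness, a morphism from $(\bm\alpha,\bm\alpha^2)$ to $(\bm\beta,\bm\beta^2)$ in $\AUT(n)$ is an $[h]\in\pgo(n)$ satisfying the two simultaneous conjugation equations; the first of these is precisely a morphism $\bm\alpha\to\bm\beta$ in $\TRI^*(n)$ with the same label $[h]$, so the morphism maps are surjective (and in fact identical on labels).

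The main technical step, and the step I expect to be the obstacle, is showing that the image is contained in $\TRI(n)$; once this is established, the ``full subcategory'' claim is automatic from the fullness just proved. Fix a trialitarian $\bm\alpha$, which by definition is outer of order three. I would first argue that $\bm\alpha$ is in fact strongly outer: in the quotient $\Aut(\PGO^+(n))(k)/\Inn(\PGO^+(n))(k)\simeq S_3$ of Proposition \ref{P: KMRT}, the coset of $\bm\alpha$ is nontrivial (since $\bm\alpha$ is outer) and of order dividing three, hence of order three, and cannot therefore lie in the index-three subgroup $\Inn^*(\PGO^+(n))(k)/\Inn(\PGO^+(n))(k)$ (which has order two inside $S_3$). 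The same applies to $\bm\alpha^2=\bm\alpha^{-1}$, so both components are strongly outer. Finally, $\pi(\bm\alpha)$ and $\pi(\bm\alpha^2)$ are distinct in the three-element set $\Delta(n)$: if they coincided, then $\pi(\bm\alpha^2\bm\alpha^{-1})=\pi(\ID)$, i.e.\ $\pi(\bm\alpha)=\pi(\ID)$, contradicting strong outerness. Together with $\pi(\ID)$ this exhausts $\Delta(n)$, so $(\bm\alpha,\bm\alpha^2)\in\TRI(n)$, completing the argument.
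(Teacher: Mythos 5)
Your proof is correct and complete; the paper itself omits a proof, remarking only that the lemma is ``easy to check,'' so you have supplied the routine verifications plus the one substantive point, namely that trialitarian automorphisms are strongly outer, which you derive from the $S_3$-structure of $\Aut(\PGO^+(n))(k)/\Inn(\PGO^+(n))(k)$ in Proposition~\ref{P: KMRT}. One small stylistic quibble: you write ``$\pi(\bm\alpha^2\bm\alpha^{-1})=\pi(\ID)$'' as if $\pi$ were a group homomorphism, but $\Inn^*\left(\PGO^+(n)\right)(k)$ is \emph{not} normal in $\Aut\left(\PGO^+(n)\right)(k)$ (its image in $S_3$ has order two), so $\Delta(n)$ is merely a set of left cosets and $\pi$ is not multiplicative. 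The conclusion you want is nonetheless immediate: $\pi(\bm\alpha)=\pi(\bm\alpha^2)$ means $\bm\alpha\,\Inn^*=\bm\alpha^2\,\Inn^*$, i.e.\ $\bm\alpha^{-2}\bm\alpha=\bm\alpha^{-1}\in\Inn^*$, hence $\bm\alpha\in\Inn^*$, contradicting strong outerness; it is worth phrasing it this way to avoid invoking a homomorphism property that $\pi$ does not have.
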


Thus $\mathfrak{G}$ induces an isomorphism of categories 
\[\mathfrak{G}':\TRI^*(n)\to\mathfrak G\left(\TRI^*(n)\right)\subseteq \TRI(n).\]
The
structural results from \cite{CKT} and \cite{CEKT} can now be reformulated as follows.

\begin{Prp} The functor $\mathfrak F:\Comp(n)\to\AUT(n)$ induces an equivalence of categories
$\mathfrak{F}':\Comp^S(n)\to\mathfrak{G}(\TRI^*(n))$.
\end{Prp}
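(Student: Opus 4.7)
The plan is to realize $\mathfrak F'$ as the simultaneous restriction and corestriction of the functor $\mathfrak F:\Comp(n)\to\AUT(n)$ from Theorem \ref{T: main} to the full subcategories $\Comp^S(n)\subseteq\Comp(n)$ and $\mathfrak G(\TRI^*(n))\subseteq\AUT(n)$, and then verify the three conditions for an equivalence of categories by inheritance from the equivalence $\Comp(n)\to\TRI(n)$ already obtained.

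First I would check that $\mathfrak F$ corestricts correctly. For symmetric $S\in\Comp^S(n)$, Proposition \ref{P: CKT}(ii) yields that $\bm\rho_1^S$ is a trialitarian automorphism and $\bm\rho_2^S=(\bm\rho_1^S)^2$. Therefore
\[\mathfrak F(S)=(\bm\rho_1^S,(\bm\rho_1^S)^2)=\mathfrak G(\bm\rho_1^S)\in\mathfrak G(\TRI^*(n)),\]
so $\mathfrak F'$ is well defined on objects; on morphisms it acts, as $\mathfrak F$ does, by $h\mapsto[h]$. Fullness and faithfulness of $\mathfrak F'$ are then immediate, because both $\Comp^S(n)$ and $\mathfrak G(\TRI^*(n))$ are by definition full subcategories, and $\mathfrak F$ itself is full and faithful by Theorem \ref{T: main}.

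The substantive step is density. Let $\bm\alpha$ be a trialitarian automorphism of $\PGO^+(n)$, so that $\mathfrak G(\bm\alpha)=(\bm\alpha,\bm\alpha^2)$ is a general object of $\mathfrak G(\TRI^*(n))$. Proposition \ref{P: CKT}(iii) supplies an eight-dimensional symmetric generalized composition algebra $\tilde S$ with quadratic form $n$ such that $\bm\rho_1^{\tilde S}=\bm\alpha$ (and consequently $\bm\rho_2^{\tilde S}=\bm\alpha^2$). Invoking Proposition 3.6 of \cite{CKT}, exactly as in the proof of Lemma \ref{L: normalization}, I replace $\tilde S$ by an isomorphic symmetric composition algebra $S\in\Comp^S(n)$ via some $h\in\go(n)$. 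Proposition \ref{P: CKT}(iv) (applicable in the generalized setting) then gives
\[\bm\rho_r^{\tilde S}=\bm\kappa_{[h]}\bm\rho_r^S\bm\kappa_{[h]}^{-1}\qquad (r=1,2),\]
so $[h]$ is an isomorphism in $\AUT(n)$ from $\mathfrak F(S)=(\bm\rho_1^S,\bm\rho_2^S)$ to $(\bm\alpha,\bm\alpha^2)=\mathfrak G(\bm\alpha)$. Since $\mathfrak G(\TRI^*(n))$ is full in $\AUT(n)$, this is also an isomorphism in $\mathfrak G(\TRI^*(n))$, which establishes density.

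The only real obstacle is the density step, and it is the reason for citing Proposition 3.6 of \cite{CKT}: Proposition \ref{P: CKT}(iii) matches trialitarian automorphisms with \emph{generalized} symmetric composition algebras (up to scalar multiples), whereas $\Comp^S(n)$ comprises only multiplier-one (normalized) ones, and passing between the two requires that external input. Everything else is a formal consequence of Theorem \ref{T: main} together with the structural description of trialitarian automorphisms recalled in Proposition \ref{P: CKT}.
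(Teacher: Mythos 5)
Your proof is correct and follows essentially the same route as the paper's: well-definedness from $\bm\rho_2^S=(\bm\rho_1^S)^2$, fullness and faithfulness inherited from $\mathfrak F$ and the fullness of both subcategories, and density via Proposition \ref{P: CKT}(iii) together with the passage from generalized to normalized symmetric composition algebras (Proposition 3.6 of \cite{CKT}) and Proposition \ref{P: CKT}(iv). The paper's own proof is just a compressed version of exactly this argument.
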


Here, $\Comp^S(n)$ is the full subcategory of $\Comp(n)$ whose objects are all symmetric composition algebras in $\Comp(n)$.

\begin{proof} The induced functor is well-defined since if $S\in\Comp^S(n)$, then $\bm\rho_1^S$ is a trialitarian automorphism
with
square $\bm\rho_2^S$. It is full by fullness of $\Comp^S(n)$ and $\mathfrak{F}$, and faithful by faithfulness of the latter. If
$\bm\alpha\in \TRI^*(n)$, then from \cite{CKT} we know that $\bm\alpha=\bm\rho_1^S$ for some symmetric generalized composition
algebra
$S$, and that $\bm\alpha\simeq \bm\rho_1^T$ in $\TRI^*(n)$ for the unique symmetric composition algebra $T\simeq S$.
\end{proof}

In other words, composing $\mathfrak F'$ with the inverse of $\mathfrak{G}'$, one obtains an equivalence between the category of
symmetric composition algebras with quadratic form $n$ and the groupoid arising from  the action of $\pgo(n)$ on the set of all
trialitarian automorphisms of $\PGO^+(n)$ by conjugation by weakly inner automorphisms.

\subsection{The Double Sign}
The double sign was defined for finite-dimensional real division algebras in \cite{DD}, and the topic has been implicitly treated
for composition algebras over arbitrary fields of characteristic not two in e.g.\ \cite{EP}. Recall that for a composition algebra
$C$ and an element $a\in C$ with $n(a)\neq 0$, the left and right multiplication operators $L_a^C$ and $R_a^C$ are similarities.

\begin{Lma} Let $C$ be a finite-dimensional composition algebra over $k$, and let $a,b\in C$ be anisotropic. Then
$L_a^C$ is a proper similarity if and only if $L_b^C$ is, and $R_a^C$ is proper if and only if $R_b^C$ is.
\end{Lma}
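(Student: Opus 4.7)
The plan is to reduce the problem, via Lemma~\ref{L: Kaplansky}, to the case of Hurwitz algebras, and then to show that in a Hurwitz algebra the left and right multiplications by anisotropic elements always lie in $\go^+(n)$.

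Writing $C=H_{f,g}$ for some Hurwitz algebra $H\in\Comp(n)$ and $f,g\in\ort(n)$, a direct computation from the definition of the isotope gives, for every $a\in C$,
\[L_a^C=L_{f(a)}^H\circ g\qquad\text{and}\qquad R_a^C=R_{g(a)}^H\circ f.\]
Since $\go^+(n)$ is normal of index two in $\go(n)$, the coset $[L_a^C]\in\go(n)/\go^+(n)$ is the product of the cosets of $L_{f(a)}^H$ and $g$, the second of which does not depend on $a$. As $f$ is a bijection on $H$ that preserves anisotropy, the first assertion of the lemma reduces to the claim that $[L_u^H]=[L_v^H]$ in $\go(n)/\go^+(n)$ for all anisotropic $u,v\in H$; the assertion about $R_a^C$ is entirely analogous.

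To establish the reduced claim, I would prove the stronger statement that $L_u^H\in\go^+(n)$ for every anisotropic $u$ in the Hurwitz algebra $H$. Setting $d=\dim H$, both $u\mapsto\det L_u^H$ and $u\mapsto n(u)^{d/2}$ are homogeneous polynomials of degree $d$ in the coordinates of $u$ (the case $d=1$ being vacuous). The general similarity identity $\det(\phi)^2=\mu(\phi)^d$, applied to $\phi=L_u^H$ with multiplier $n(u)$, yields $\det(L_u^H)^2=n(u)^d$ on the Zariski-dense subset of anisotropic vectors; passing to the algebraic closure if necessary, this holds as a polynomial identity. Factoring $(\det L_u^H-n(u)^{d/2})(\det L_u^H+n(u)^{d/2})=0$ in the UFD $\bar k[u_1,\ldots,u_d]$ forces $\det L_u^H=\pm n(u)^{d/2}$ as polynomials, and evaluation at $u=e$ fixes the sign as $+$. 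The standard characterization of proper similarities in even dimension by the relation $\det=\mu^{d/2}$ (cf.~\cite[\S 12]{KMRT}) then yields $L_u^H\in\go^+(n)$, and the argument for $R_u^H$ is identical.

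The main obstacle is handling the polynomial identity cleanly: one must pass to the algebraic closure to guarantee Zariski-density of the anisotropic set, and invoke unique factorization to extract $\det L_u^H=\pm n(u)^{d/2}$ from the squared identity. Everything else amounts to the Kaplansky bookkeeping of the first paragraph together with an appeal to the determinant characterization of $\go^+(n)$.
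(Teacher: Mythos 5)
Your proof is correct and follows the same route as the paper: reduce via Kaplansky's lemma to an isotope $C=H_{f,g}$ of a Hurwitz algebra, observe $L_a^C=L_{f(a)}^H g$ and $R_a^C=R_{g(a)}^H f$, and invoke the fact that multiplications by anisotropic elements of a Hurwitz algebra are proper. The only difference is that the paper simply cites this last fact (attributing it essentially to [EP]), whereas you supply a proof via the polynomial identity $\det L_u^H=n(u)^{d/2}$ and the determinant characterization of $\go^+(n)$, which is a sound (and welcome) addition.
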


An element of a quadratic space is called anisotropic if its is not in the kernel of the quadratic form. The proof is essentially
due to \cite{EP}.

\begin{proof} There exists a Hurwitz algebra $H$ and $f,g\in\go(n)$ such that $C=H_{f,g}$. Then $L_c^C=L_{f(c)}^Hg$ and
$R_c^C=R_{g(c)}^Hf$ for all $c\in C$. Now left and right multiplications by anisotropic elements in
Hurwitz algebras are proper similarities. Thus $L_c^C$ is proper if and only if $g$ is, and $R_c^C$ is proper if and only if $f$
is. This completes the proof.
\end{proof}

We define the \emph{sign} $\sgn (h)$ of a similarity $h$ as $+1$ if $h$ is proper, and $-1$ otherwise. The above lemma guarantees
that the following notion is well-defined.

\begin{Def} The \emph{double sign} of a finite-dimensional composition algebra $C$ is the pair $(\sgn(L_c^C),\sgn(R_c^C))$ for any
$c\in C$ with $n(c)\neq 0$.
\end{Def}

As noted in \cite{EP}, the double sign is the pair $(\det(L_c^C),\det(R_c^C))$ for any $c\in C$ with $n(c)=1$. Recall that such
elements
exist in any composition algebra. It is easily seen that isomorphic composition algebras have the same double sign. Thus
\[\Comp(n)=\coprod_{(r,s)\in\{\pm1\}^2} \Comp^{rs}(n),\]
where $\Comp^{rs}(n)$ is the full subcategory of $\Comp(n)$ consisting of all algebras with double sign $(r,s)$.

We can now prove that  the double sign can be inferred from the triality pair of the algebra.

\begin{Prp} Let $n\in\Pf_3(k)$ and $C\in\Comp(n)$. Then the double sign of $C$ is
$((-1)^{o_2},(-1)^{o_1})$, where $o_r$ is the order of the coset of $\bm\rho_r^C$ in the quotient group
$\Aut(\PGO^+(n))(k)/\Inn(\PGO^+(n))(k)$.
\end{Prp}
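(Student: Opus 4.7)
The plan is to compute $\sgn(L_c^C)$ and $\sgn(R_c^C)$ in terms of a symmetric isotope decomposition $C=S_{f,g}$, and then match these signs against $o_1$ and $o_2$ via the $S_3$-structure of $\Aut(\PGO^+(n))(k)/\Inn(\PGO^+(n))(k)$ from Proposition \ref{P: KMRT}.

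By Lemma \ref{L: symmetric} I would pick a para-Hurwitz algebra $S=H_{i,i}\in\Comp(n)$ (with $H$ a Hurwitz algebra and $i$ its canonical involution) and $f,g\in\ort(n)$ such that $C=S_{f,g}$. Expanding the multiplication yields
\[
L_c^C=L_{f(c)}^{S}\circ g\quad\text{and}\quad R_c^C=R_{g(c)}^{S}\circ f.
\]
The next step is to argue that $L_a^S$ and $R_a^S$ are \emph{improper} similarities for every anisotropic $a\in S$. Since $L_a^S=L_{i(a)}^{H}\circ i$, the lemma preceding the definition of the double sign (proved in \cite{EP}) says $L_{i(a)}^{H}$ is proper, while the involution $i$ fixes the unit of $H$ and acts as $-\Id$ on its seven-dimensional orthogonal complement, so $\det(i)=-1$; as a $3$-fold Pfister form has trivial discriminant, this forces $i\notin\go^+(n)$. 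Consequently $\sgn(L_c^C)=-\sgn(g)$ and $\sgn(R_c^C)=-\sgn(f)$.

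I would then locate the cosets of $\bm\rho_r^C$ in $\Aut(\PGO^+(n))(k)/\Inn(\PGO^+(n))(k)\simeq S_3$. Lemma \ref{L: relation} gives $\bm\rho_1^C=\bm\kappa_{[f]}^{-1}\bm\rho_1^S$ and $\bm\rho_2^C=\bm\kappa_{[g]}^{-1}\bm\rho_2^S$. From Proposition \ref{P: KMRT} the coset of $\bm\rho_r^S$ has order $3$, the coset of $\bm\kappa_{[h]}$ is trivial when $h\in\go^+(n)$, and has order $2$ when $h\in\go(n)\setminus\go^+(n)$. Using that in $S_3$ a $3$-cycle multiplied by the identity is again a $3$-cycle while a $3$-cycle multiplied by a transposition is a transposition, the case split on $\sgn(f)$ gives $o_1=3$ if $f$ is proper and $o_1=2$ if $f$ is improper; in either case $(-1)^{o_1}=-\sgn(f)$. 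Symmetrically $(-1)^{o_2}=-\sgn(g)$.

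Putting the two computations together, $\sgn(R_c^C)=-\sgn(f)=(-1)^{o_1}$ and $\sgn(L_c^C)=-\sgn(g)=(-1)^{o_2}$, so the double sign of $C$ equals $((-1)^{o_2},(-1)^{o_1})$. The main subtle step is the impropriety of $i$ — equivalently, that left and right multiplications in an eight-dimensional para-Hurwitz algebra are improper similarities — because this is exactly the fact that shifts the "raw" sign $\sgn(f)$ contributed by the isotope parameter by a factor of $-1$ so as to match the parity of $o_1$, and symmetrically for $g$ and $o_2$; without it the asserted correspondence would be inverted.
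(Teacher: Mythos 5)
Your proof is correct and follows essentially the same route as the paper's: both reduce to an isotope decomposition, use that multiplications by anisotropic elements of a Hurwitz algebra are proper while the canonical involution $i$ is improper, and then read off the order of the coset of $\bm\rho_r^C$ from Lemma \ref{L: relation} and the $S_3$-structure of $\Aut(\PGO^+(n))(k)/\Inn(\PGO^+(n))(k)$. The only cosmetic difference is that you anchor the computation at the para-Hurwitz isotope $S_{f,g}$ (so the factor $-1$ enters through the impropriety of $L_a^S$, $R_a^S$), whereas the paper writes $C=H_{f,g}$ and factors $f=i^{p_1}f'$, $g=i^{p_2}g'$ — the same sign-bookkeeping in a different parametrization.
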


\begin{proof} We have $C=H_{f,g}$ for some Hurwitz algebra $H\in\Comp(n)$ and $f,g\in\ort(n)$. Since the canonical
involution $i$ on $H$ is not proper, we have 
\[(f,g)=\left(i^{p_1}f',i^{p_2}g'\right)\]
 for some $p_1,p_2\in\{0,1\}$ and $f',g'\in \ort^+(n)$.
Then by Lemma
\ref{L: relation}, for any $r\in\{1,2\}$, the coset of $\bm\rho_r^C$ equals the coset of $\bm\kappa_{[j]}^{p_r}\bm\rho_r^H$. If
$p_r=1$,
then the coset of $\bm\rho_r^C$ coincides with the coset of $\bm\rho_r^S$ for the symmetric composition algebra $S=H_{i,i}$, which
has order three. If
$p_r=0$, then the coset of $\bm\rho_r^C$ coincides with the coset of $\bm\kappa_{[j]}\bm\rho_r^S$, which has order two, since it
is product of an element of order two with one of order three. In both cases $o_i=p_i+2$. Now the double sign of
$H$ is $(+1,+1)$, which implies that the double sign of $C$ is $((-1)^{p_2},(-1)^{p_1})$. This completes the proof.
\end{proof}

\subsection{Isomorphism Conditions}
The category $\Comp(n)$ contains a unique isomorphism class $\mathfrak H(n)$ of Hurwitz algebras. In view of Lemmata \ref{L:
Kaplansky} and \ref{L: juggling}, one may fix a Hurwitz algebra $H\in\mathfrak H(n)$ and study the full subcategory of $\Comp(n)$
consisting of all
orthogonal isotopes of $H$, which is dense and hence equivalent to $\Comp(n)$. This approach is pursued in
\cite{Ca}, \cite{CDD} and \cite{A2} (for real division composition algebras) and in \cite{Da} (for general isotopes of Hurwitz
algebras over arbitrary fields).

We will recall the isomorphism conditions given in these papers, and give a new proof of these using our approach above. To begin
with, the following result is proved in \cite{Da}.

\begin{Lma} Let $(H,n)$ be a Hurwitz algebra and $f,g,f',g'\in\gl(A)$. If $h: H_{f,g}\to H_{f',g'}$ is an isomorphism,
then $h\in\go^+(n)$.
\end{Lma}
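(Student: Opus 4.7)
The plan is to deduce $h \in \go^+(n)$ in two steps from the triality-type relation that the isomorphism condition produces on $H$. Setting $\phi := f' h f^{-1}$ and $\psi := g' h g^{-1}$, the condition $h(f(x) g(y)) = f'(h(x)) g'(h(y))$ (products in $H$), rewritten via $u = f(x)$, $v = g(y)$, becomes
\[
  h(uv) = \phi(u) \psi(v), \qquad u, v \in H.
\]

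First I would show $h$ is a similarity. Applying the multiplicative norm $n$ of $H$ to the displayed identity and specialising $u = e$ and $v = e$ in turn yields $n(h(e)) \cdot n(h(uv)) = n(h(u)) n(h(v))$. Were $n(h(e))$ to vanish, the same manipulation would force $n \circ h \equiv 0$, contradicting non-degeneracy of $n$ together with bijectivity of $h$; so $\mu := n(h(e)) \in k^\ast$, and $\mu^{-1}(n \circ h)$ is a non-degenerate, multiplicative, unit-normalised quadratic form on $H$. By the uniqueness of the norm of a Hurwitz algebra (every element satisfies the intrinsic quadratic $x^2 - t(x) x + q(x) = 0$, with $t$ and $q$ determined by the algebra), this form must coincide with $n$, whence $h \in \go(n)$ with multiplier $\mu$.

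Next, setting $a := \phi(e)$ and $b := \psi(e)$, the specialisations $v = e$ and $u = e$ in the displayed relation give $\phi = R_b^{-1} \circ h$ and $\psi = L_a^{-1} \circ h$. Since $n(ab) = n(h(e)) = \mu \neq 0$, both $a$ and $b$ are anisotropic, so the Hurwitz multiplications $R_b^H$ and $L_a^H$ are proper similarities of $n$ (the standard fact about multiplication by anisotropic elements in a Hurwitz algebra, used in Section 4.2). Hence $\phi, \psi \in \go(n)$, and the three maps $h, \phi, \psi$ share a common Dickson parity. To exclude the improper case I would argue by contradiction: if $h$ were improper then $ih \in \go^+(n)$, where $i$ is the canonical involution of $H$; applying $i$ to the displayed relation and using that $i$ is an anti-automorphism of $H$ gives $(ih)(uv) = (i\psi)(v) \cdot (i\phi)(u)$, while Proposition 4.1 applied to $ih$ supplies $(k_1, k_2) \in \go^+(n)^2$ with $(ih)(uv) = k_1(u) k_2(v)$. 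Comparing these two expressions, specialising at $u = e$ and $v = e$, and simplifying via the Moufang identities of the alternative algebra $H$ forces a commutativity relation in $H$ incompatible with $\dim H \geq 4$, yielding the desired contradiction.

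The main obstacle is precisely this last step: turning the Moufang calculation into a watertight contradiction. A cleaner, structural route -- implicit in \cite{Da} -- is to invoke the identification of the autotopy group of the octonion algebra $H$ with the image of $\operatorname{Spin}(n)$ under its three natural eight-dimensional representations, whereby any triple satisfying the displayed relation lies automatically in $\go^+(n)^3$.
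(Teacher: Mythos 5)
First, a structural remark: the paper does not prove this lemma at all --- it is quoted from \cite{Da} --- so there is no internal proof to compare your argument against; it must stand on its own. Up to the reduction to autotopies your proposal is sound. The relation $h(uv)=\phi(u)\psi(v)$ with $\phi=f'hf^{-1}$, $\psi=g'hg^{-1}$ is correct; the norm manipulation together with the uniqueness of the norm of a Hurwitz algebra does give $h\in\go(n)$; and the specialisations $\phi=(R_b^H)^{-1}h$, $\psi=(L_a^H)^{-1}h$ with $a=\phi(e)$, $b=\psi(e)$ anisotropic do show that $h$, $\phi$, $\psi$ have a common parity. The genuine gap is exactly where you locate it yourself: excluding the improper case, which is the entire content of the lemma (properness rather than mere similarity), is asserted but not carried out. ``Simplifying via the Moufang identities forces a commutativity relation'' is not an argument, and your fallback --- citing the identification of the autotopy group of $H$ with the image of $\operatorname{Spin}(n)$ in its three eight-dimensional representations --- is essentially a citation of the statement to be proved, on the same footing as the paper's own appeal to \cite{Da}. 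As written, your proof establishes only $h\in\go(n)$.

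For what it is worth, your sketched contradiction does go through, and here is one way to close it. Comparing $(ih)(uv)=k_1(u)k_2(v)$ with $(ih)(uv)=(i\psi)(v)\,(i\phi)(u)$ and substituting $s=(i\phi)(u)$, $t=(i\psi)(v)$ produces linear bijections $P,Q$ with $P(s)Q(t)=ts$ for all $s,t$, i.e.\ $H^{\mathrm{op}}=H_{P,Q}$; the order reversal, hence the eventual contradiction, comes precisely from pitting the anti-automorphism $i$ against the triality decomposition of the proper similarity $ih$. Specialising $t=e$, $s=e$ and $s=t=e$ gives $P=R_c^{-1}$, $Q=L_d^{-1}$ with $c=Q(e)$, $d=P(e)$ and $dc=e$; the further specialisation $t=d$ gives $P=L_d$, whence $R_c=P^{-1}=L_d^{-1}=L_{d^{-1}}=L_c$ by alternativity, so $c$ commutes with all of $H$. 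For $\dim H\geq 4$ this forces $c,d\in ke$, and the identity collapses to $st=ts$, a contradiction. Two caveats remain even then: the triality input you invoke is available in this paper only for $n\in\Pf_3(k)$, and the final contradiction needs non-commutativity, so the argument does not cover the lemma as stated for Hurwitz algebras of dimension $1$, $2$ (and needs separate treatment in dimension $4$); this is harmless for the paper's application, which is eight-dimensional, but should be said explicitly.
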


Moreover, an isomorphism condition is given in \cite{Da} for isotopes of Hurwitz algebras. For orthogonal isotopes of
eight-dimensional Hurwitz algebras, it implies the following.

\begin{Prp} Let $n\in\Pf_3(k)$ and let $H\in\Comp(n)$ be a Hurwitz algebra, $f,g,f',g'\in\ort(n)$, and $h\in\ort^+(n)$. Then $h:
H_{f,g}\to
H_{f',g'}$ is an isomorphism, if and only if
\begin{equation}\label{erik}
\left([f'],[g']\right)=\left([h_1][f][h]^{-1},[h_2][g][h]^{-1}\right),
\end{equation}
where $(h_1,h_2)$ is a pair of triality components of $h$ with respect to $H$.
\end{Prp}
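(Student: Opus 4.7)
My approach is to translate the homomorphism condition directly into a triality statement. Denoting multiplication in $H$ by juxtaposition, the map $h$ is an algebra homomorphism $H_{f,g}\to H_{f',g'}$ if and only if $h(f(x)g(y))=f'(h(x))g'(h(y))$ for all $x,y\in H$. Substituting $a=f(x)$ and $b=g(y)$ (permissible since $f,g\in\ort(n)\subseteq\gl(H)$ are invertible), this reformulates as
\[h(ab)=(f'hf^{-1})(a)\cdot(g'hg^{-1})(b)\]
for all $a,b\in H$, that is, as the assertion that $(f'hf^{-1},g'hg^{-1})$ is a pair of triality components of $h$ with respect to $H$ in the sense of Lemma \ref{L: Triality}.

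The forward direction now follows at once from the uniqueness clause of Lemma \ref{L: Triality}: if $h$ is an isomorphism, then in $\pgo^+(n)$ one has $[f'hf^{-1}]=[h_1]$ and $[g'hg^{-1}]=[h_2]$, which rearranges to \eqref{erik}. Alternatively, I could route the same identities through Theorem \ref{T: main} by expressing $\rho_r^C=\kappa_{[f]}^{-1}\rho_r^H$ (and analogously for $D$) via Lemma \ref{L: relation}, commuting $\rho_1^H$ past $\kappa_{[h]}^{-1}$ using that it is a group automorphism sending $[h]$ to $[h_1]$, and concluding with the triviality of the centralizer of $\pgo^+(n)$ in $\pgo(n)$ that was already exploited in Lemma \ref{L: independent}.

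For the converse, the coset identities give scalars $\mu,\nu\in k^*$ with $f'hf^{-1}=\mu h_1$ and $g'hg^{-1}=\nu h_2$; the pair $(f'hf^{-1},g'hg^{-1})$ then really decomposes $h$ with respect to $H$ precisely when $\mu\nu=1$. Taking multipliers on the two defining relations, using that $f,g,f',g'\in\ort(n)$ and $\mu(h)=1$, yields $\mu^2\mu(h_1)=1=\nu^2\mu(h_2)$, and from the standard identity $\mu(h_1)\mu(h_2)=\mu(h)=1$ one deduces $(\mu\nu)^2=1$.

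The main obstacle I anticipate is disposing of the residual sign: $(\mu\nu)^2=1$ leaves open the case $\mu\nu=-1$. Ruling this out requires either exploiting the $(\lambda,\lambda^{-1})$-freedom in the choice of $(h_1,h_2)$ from Lemma \ref{L: Triality} in conjunction with the specific representatives of $f',g'$ in $\ort(n)$, or invoking Theorem \ref{T: main} (via Lemma \ref{L: relation}) to produce a genuine isomorphism in the class $[h]$ and then identifying it with $h$ itself by comparing multipliers of $\lambda h\in\ort(n)$, forcing $\lambda\in\{\pm1\}$. This sign bookkeeping is the one delicate step I would need to handle carefully to close the converse direction.
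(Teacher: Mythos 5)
Your forward direction is correct and is genuinely more elementary than the paper's: the paper routes the whole equivalence through Theorem \ref{T: main} and then unwinds the conjugation condition using Lemma \ref{L: relation}, the identity $\rho_1^H\kappa_{[h]}^{-1}=\kappa_{\rho_1^H([h])}^{-1}\rho_1^H$ and the triviality of the centralizer of $\pgo^+(n)$ in $\pgo(n)$, whereas your substitution $a=f(x)$, $b=g(y)$ reduces that direction to the uniqueness clause of Lemma \ref{L: Triality} directly. Both are valid there.

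The sign problem you flag in the converse is a genuine obstruction, not a bookkeeping issue you failed to handle: the converse is false as literally stated, so no argument will remove the residual $\mu\nu=-1$. Condition \eqref{erik} only constrains the classes $[f'],[g']$ in $\pgo(n)$, hence is unchanged under $g'\mapsto -g'$; but $H_{f',-g'}=-H_{f',g'}$, and $h$ cannot be an algebra isomorphism onto both $H_{f',g'}$ and its negative. Concretely, take $f=g=f'=h=\Id$ and $g'=-\Id$: then \eqref{erik} holds with $(h_1,h_2)=(\Id,\Id)$, yet $\Id:H\to -H$ is not a homomorphism. Your multiplier computation identifies exactly what is true: \eqref{erik} gives $(f'hf^{-1},g'hg^{-1})=(\mu h_1,\nu h_2)$ with $\mu\nu=\pm1$, and $h$ is an isomorphism precisely when $\mu\nu=1$; equivalently, \eqref{erik} holds if and only if one of $\pm h$ is an isomorphism, i.e.\ if and only if $f'=h_1fh^{-1}$ and $g'=h_2gh^{-1}$ hold as equalities in $\go(n)$ for some admissible choice of the pair $(h_1,h_2)$. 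The paper's own proof does not escape this: it reads Theorem \ref{T: main} as saying that $h$ is an isomorphism if and only if $\left(\rho_1^D,\rho_2^D\right)=\left(\kappa_{[h]}\rho_1^C\kappa_{[h]}^{-1},\kappa_{[h]}\rho_2^C\kappa_{[h]}^{-1}\right)$, but fullness of $\mathfrak F$ only produces an isomorphism $\lambda h$ in the class $[h]$ (with $\lambda=\pm1$, since isomorphisms are isometries), not $h$ itself. So your attempt is as complete as the statement permits; the correct repair is either to weaken the conclusion to ``$h$ or $-h$ is an isomorphism'' or to strengthen \eqref{erik} to equalities of similarities rather than of their classes.
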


A similar result is proven in \cite{Ca} for the case $k=\mathbb R$ and $n=n_E$, the standard Euclidean norm. We will now
show that this result in fact follows by applying Theorem \ref{T: main} above.

\begin{proof} Set $C=H_{f,g}$ and $D=H_{f',g'}$. By Theorem \ref{T: main}, $h:C\to D$ is an isomorphism if and only if
\[\left(\rho_1^D,\rho_2^D\right)=\left(\kappa_{[h]}\rho_1^C\kappa_{[h]}^{-1},\kappa_{[h]}\rho_2^C\kappa_{[h]}^{-1}\right).\]
By Lemma \ref{L: relation}, the statement $\rho_1^D=\kappa_{[h]}\rho_1^C\kappa_{[h]}^{-1}$ is equivalent to
\begin{equation}\label{equivalent}
\kappa_{[f']}^{-1}\rho_1^H=\kappa_{[h]}\kappa_{[f]}^{-1}\rho_1^H\kappa_{[h]}^{-1}.
\end{equation}
Now
\[\rho_1^H\kappa_{[h]}^{-1}=\kappa_{\rho_1^H([h])}^{-1}\rho_1^H\]
since $\rho_1^H$ is a group homomorphism and $[h]\in\pgo^+(n)$, and therefore \eqref{equivalent} is equivalent to
\[\kappa_{[f']}^{-1}=\kappa_{[h][f]^{-1}\rho_1^H([h])^{-1}},\]
which, since the centralizer of $\pgo^+(n)$ in $\pgo(n)$ is trivial, is in turn equivalent to
\[[f']=\rho_1^H([h])[f][h]^{-1}.\]
By an analogous argument, the statement $\rho_2^D=\kappa_{[h]}\rho_2^C\kappa_{[h]}^{-1}$ is equivalent to
\[[g']=\rho_2^H([h])[g][h]^{-1},\]
and thus altogether $h:C\to D$ is an isomorphism if and only if \eqref{erik} holds, as desired.
\end{proof}

\bibliographystyle{amsplain}
\bibliography{references}
\end{document}